\theoremstyle{plain}
\newtheorem{theorem}{Theorem}
\newtheorem{proposition}{Proposition}
\newtheorem{lemma}{Lemma}
\newtheorem*{claim}{Claim}
\theoremstyle{remark}
\newtheorem{remark}{Remark}
\newcommand{\E}{\mathbb{E}}
\newcommand{\R}{\mathbb{R}}
\newcommand{\Z}{\mathbb{Z}}
\newcommand{\p}{\mathbb{P}}
\title{Strict inequalities for arm exponents in planar percolation}
\author{Ritvik \textsc{Ramanan Radhakrishnan}\thanks{ritvik.radhakrishnan@math.ethz.ch}~ and Vincent \textsc{Tassion}\setcounter{footnote}{2}\thanks{vincent.tassion@math.ethz.ch}}
\date{\today}
\begin{document}
\maketitle

\begin{abstract}
  We discuss a general method to obtain quantitative improvements of correlation inequalities and apply it to arm estimates for Bernoulli bond percolation on $\Z^2$. Our first result is that the two-arm exponent is strictly larger than twice the one-arm exponent and can be seen as a quantitative improvement of the Harris-FKG inequality. This answers a question of Garban and Steif \cite[Open Problem 13.6]{MR3468568}, which  was motivated by the study of exceptional times in dynamical percolation \cite[section 9]{MR2630053}.  Our second result  is that the monochromatic arm exponents are strictly larger than their polychromatic versions, and can be seen as a quantitative improvement of Reimer's main lemma~\cite[Lemma 4.1]{MR1703130}.  This second result  is not new; it was already proved by  Beffara and Nolin \cite{MR2857240} using a different argument.
\end{abstract}

\section*{Introduction}

We consider critical face percolation on the two dimensional hexagonal lattice. Let $\mathbb{H}$ be the set of faces of the hexagonal lattice, where a hexagon is centred at each point $k+\ell e^{\mathrm i\pi/3}$, for $k,\ell\in \mathbb Z/10$.

We colour each face in $\mathbb{H}$ black or white independently with probability $1/2$. In this colouring, we view hexagons as closed subsets of the plane. In particular, hexagons overlap at their boundaries and some points are coloured both black and white.

\begin{figure}[htbp]
  \centering
 \hfill
    \begin{minipage}[c]{.28\linewidth}
    \includegraphics[width=\textwidth]{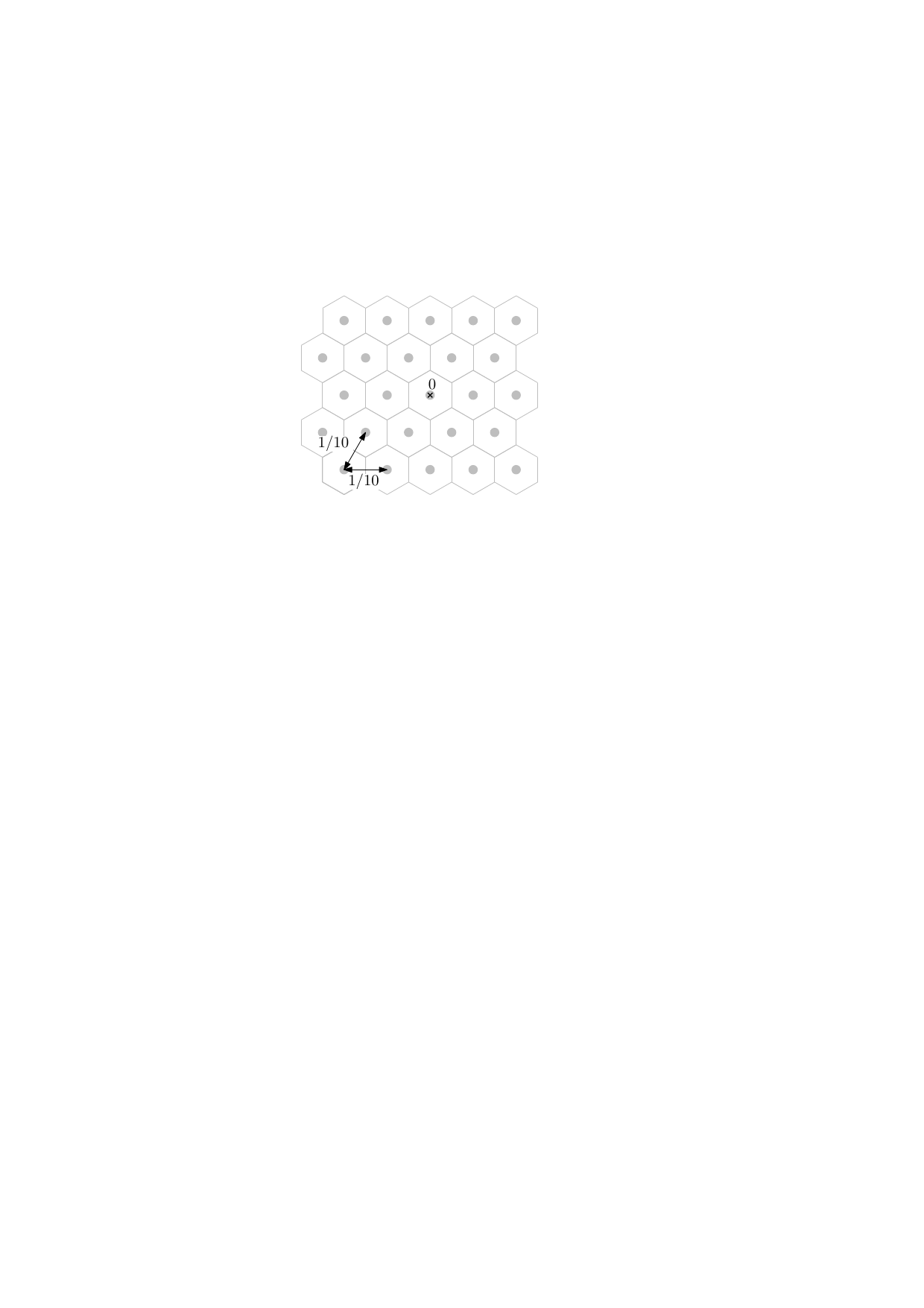}
  \end{minipage}\hfill
    \begin{minipage}[c]{.28\linewidth}
      \includegraphics[width=\textwidth]{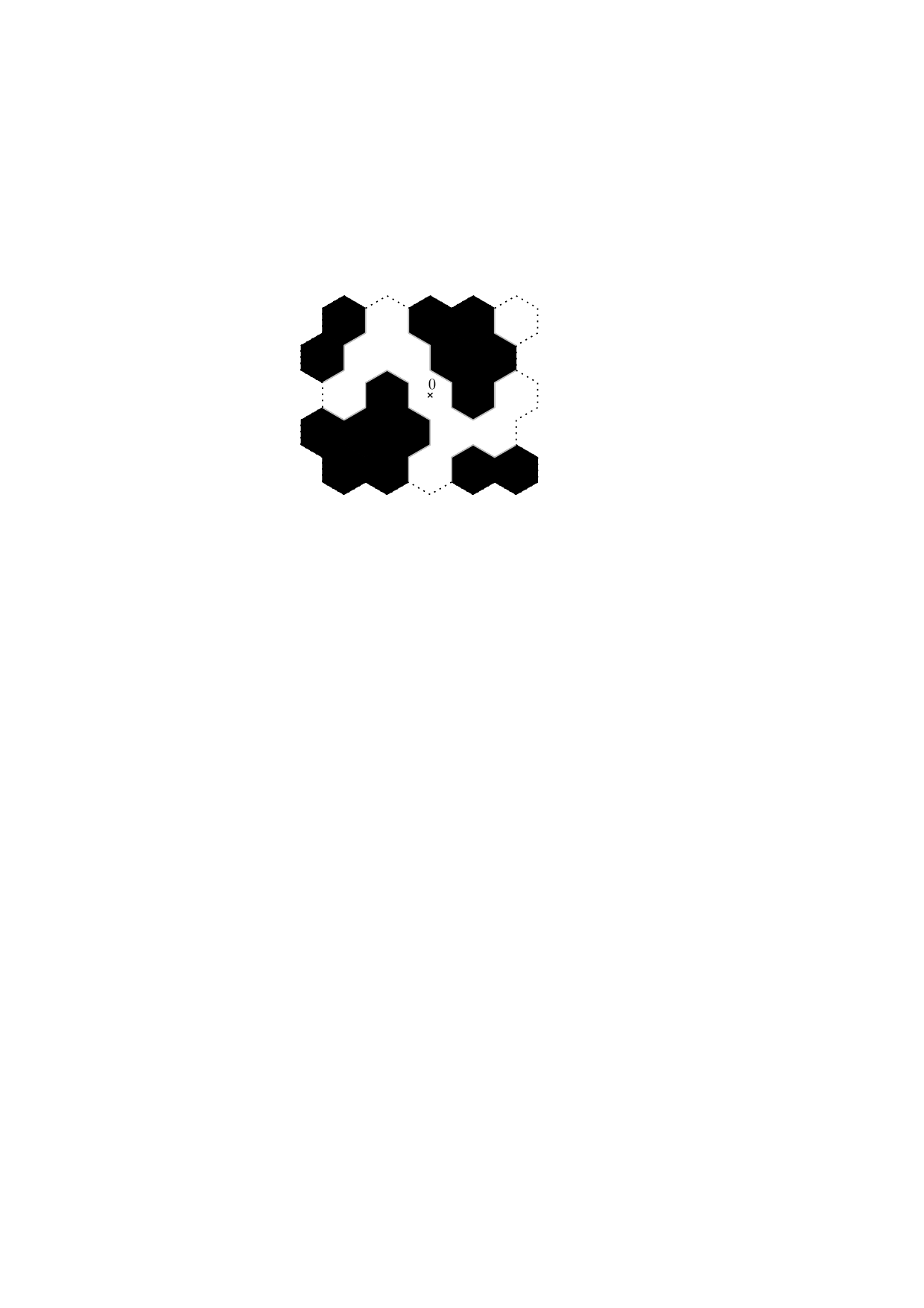}
  \end{minipage}\hfill{}
  \caption{Left: A portion of the plane covered by the hexagons in $\mathbb H$. Right: Percolation on $\mathbb H$, the grey points at the interface between black and white hexagons are formally considered both black and white. }
\end{figure}

For $n\ge 1$, let $A_n(b)$ (resp.\@ $A_n(w)$) be the event that there is a black (resp.\@ white) path connecting the hexagon centred at $0$ to the boundary of $[-n,n]^2$. We emphasize that in the definition of $A_n(c)$, the connection of colour $c$ is from the \emph{hexagon centered at $0$}. In particular, $A_n(c)$ does not imply that the hexagon centred at $0$ is of colour $c$ nor does it imply that the intersection  $A_n(b)\cap A_n(w)$ is non empty. Since $A_n(b)$ is increasing in black and $A_n(w)$ is decreasing, the Harris-FKG inequality (see \cite[Chapter 2]{MR1796851} for instance) implies that
\begin{equation}
\p[{A}_n(b)\cap A_n(w)]\leq\p[A_n(b)]\p[{A}_n(w)].
\end{equation}
Our first result is the following  polynomial improvement of this inequality.

\begin{theorem}\label{thm:polyarm}
     There exists $c>0$ such that for all $n\geq 10$ we have
     
\begin{equation} 
\p[A_n(b)\cap A_n(w)]\leq n^{-c}\cdot \p[A_n(b)]\p[{A}_n(w)].\label{eq:1}
\end{equation}
\end{theorem}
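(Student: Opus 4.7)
The plan is a multi-scale renormalization that extracts a constant-factor improvement over Harris--FKG at each of the $\log_2 n$ dyadic scales and aggregates them multiplicatively into the polynomial gain \eqref{eq:1}.

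Concretely, set $\mathrm{Ann}_k = [-2^{k+1},2^{k+1}]^2 \setminus [-2^k,2^k]^2$ and let $b_k$ (resp.\ $w_k$) be the event that a black (resp.\ white) path crosses $\mathrm{Ann}_k$ from inner to outer boundary. By quasi-multiplicativity of the one-arm and polychromatic two-arm events (a standard consequence of RSW and gluing),
\begin{equation}
\p[A_n(b)]\asymp \prod_{k=0}^{K}\p[b_k], \qquad \p[A_n(b)\cap A_n(w)]\asymp \prod_{k=0}^{K}\p[b_k\cap w_k],
\end{equation}
with $2^K\asymp n$. By colour symmetry, the theorem reduces to producing an absolute $\delta>0$ such that
\begin{equation}
\p[b_k\cap w_k]\le (1-\delta)\,\p[b_k]\,\p[w_k]\qquad\text{for every }k;\label{eq:singlescalegoal}
\end{equation}
iterating \eqref{eq:singlescalegoal} across $K\asymp \log_2 n$ scales then yields $(1-\delta)^K\le n^{-c}$ with $c=-\log_2(1-\delta)>0$.

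For the single-scale gain I would introduce a \emph{witness} $W_k$: the colouring of a fixed macroscopic sub-annulus $A'_k\subset\mathrm{Ann}_k$. Applying FKG conditionally on $W_k$ to the events $b_k$ and $w_k$, which remain monotone in the remaining faces,
\begin{equation}
\p[b_k\cap w_k]\le \E\bigl[\p[b_k\mid W_k]\,\p[w_k\mid W_k]\bigr]=\p[b_k]\,\p[w_k]+\cov\bigl(\p[b_k\mid W_k],\p[w_k\mid W_k]\bigr).
\end{equation}
As functions of $W_k$, ordered by blackness of the colouring, $\p[b_k\mid W_k]$ is increasing while $\p[w_k\mid W_k]$ is decreasing, so the covariance is non-positive; it remains to show that it is strictly negative, of order $\delta\cdot\p[b_k]\p[w_k]$, uniformly in $k$. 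This amounts to an RSW-type ``rerouting'' estimate quantifying how much the colouring of $A'_k$ genuinely influences each crossing event.

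The main obstacle is precisely this uniform covariance lower bound: one needs to choose $A'_k$ so that its colouring has a non-negligible, scale-invariant influence on \emph{both} crossing events simultaneously, in the sense that the conditional crossing probabilities are separated from their unconditional counterparts by constants independent of $k$. Combining RSW with gluing of arms around $A'_k$ should suffice, but the topology has to be arranged carefully. Once the single-scale gain \eqref{eq:singlescalegoal} is secured, the multiplicative aggregation via quasi-multiplicativity is routine and yields the polynomial improvement of Theorem~\ref{thm:polyarm}.
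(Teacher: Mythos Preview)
Your outline has a reasonable shape---harvest a constant gain at each dyadic scale and multiply---but the reduction does not actually simplify the problem: the single-scale inequality \eqref{eq:singlescalegoal} is, via the very quasi-multiplicativity you invoke, essentially equivalent to the theorem itself. The conditioning device gives $\p[b_k\cap w_k]\le \p[b_k]\p[w_k]+\mathrm{cov}(X,Y)$ with $X=\p[b_k\mid W_k]$ increasing and $Y=\p[w_k\mid W_k]$ decreasing, and FKG on the $W_k$-variables gives $\mathrm{cov}(X,Y)\le 0$; but a uniform lower bound $-\mathrm{cov}(X,Y)\ge\delta$ is \emph{not} an RSW-type estimate. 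RSW tells you that each of $X,Y$ has uniformly positive variance (e.g.\ a black circuit in $A'_k$, of probability $\ge c$, forces $Y=0$), but it does not prevent the contribution to $\mathrm{cov}(X,Y)$ from configurations with neither a black nor a white circuit from cancelling the negative contribution from the circuit events. Turning ``$X$ and $Y$ both fluctuate'' into ``$X$ and $Y$ are quantitatively anti-correlated'' is exactly a quantitative strict-FKG statement, and that is the whole content of the theorem. The phrase ``RSW plus gluing should suffice'' is not a proof; if it were, the question would not have been open.

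A second, smaller issue: the factorizations $\p[A_n(b)]\asymp\prod_k\p[b_k]$ and $\p[A_n(b)\cap A_n(w)]\asymp\prod_k\p[b_k\cap w_k]$ carry implicit constants that multiply over $K\asymp\log_2 n$ scales into a polynomial factor in $n$. So even granting \eqref{eq:singlescalegoal}, iterating only yields a bound of the form $(C(1-\delta))^K$ with $C$ the quasi-multiplicativity constant, and you would need $\delta>1-1/C$, not merely $\delta>0$. This is fixable (well-separated arms, or annuli of large aspect ratio so that a single use of quasi-multiplicativity per big annulus suffices), but it must be addressed.

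The paper sidesteps both issues. It uses the \emph{exact} interpolation identity of Proposition~\ref{prop:1}, $\p[A_n(b)\cap A_n(w)]=\p[A_n(b)]\p[A_n(w)]\,e^{I}$, so nothing accumulates; it then lower-bounds $-I$ by restricting the integrand to hexagons in $\Lambda_{k,3k}$ and times $t\in[t_k,2t_k]$ at each dyadic scale $k$. The per-scale lower bound (Lemma~\ref{lem:pivest}) is obtained not from RSW alone but from a geometric construction with two interlaced circuits forcing a hexagon to be simultaneously pivotal for both arm events, combined with the noise-stability of the dynamical four-arm event. This dynamical/noise-sensitivity input is what stands in for the missing strict-covariance estimate in your plan.
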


\begin{remark}
  Note that
\begin{equation}
    \mathbb P[A_n(b)]=\mathbb P[A_n(w)],
\end{equation}
since the set of black hexagons has the same law as the set of white hexagons. This colour-switching symmetry shows that~\eqref{eq:1} is equivalent to
\begin{equation}\label{eq:2}
\p[A_n(b)\cap A_n(w)]\leq n^{-c}\cdot \p[A_n(b)]^2.
\end{equation}

\end{remark}

\begin{remark}
As stated, Theorem~\ref{thm:polyarm} is not new. Indeed, for face percolation on hexagons the critical exponents are known: Lawler, Schramm and Werner~\cite{MR1887622}, and Smirnov and Werner~\cite{MR1879816}  showed that
  \begin{equation}
    \label{eq:3}
    \mathbb P[A_n(b)]=n^{-5/48+o(1)}\quad\text{and}\quad\mathbb P[A_n(b)\cap A_n(w)]=  n^{-1/4+o(1)}
  \end{equation}
  as $n\rightarrow \infty$, which directly implies Theorem~\ref{thm:polyarm}. Our main contribution is the proof of this result without using the knowledge of the exponents. In particular, our method also applies to critical bond percolation on the square lattice, where the analogue of Theorem~1 is new. We chose to work with percolation on the hexagonal lattice because it is the natural setup for our second result (Theorem~\ref{thm:monoarm}), whose proof only applies for percolation on the hexagonal lattice.
\end{remark}

\begin{remark}
  The analogues of~\eqref{eq:3} for three or more arms are easier to prove via exploration and Russo-Seymour-Welsh estimates. For more details see \cite[Appendix A]{MR2736153}, which shows that the five arm exponent is is strictly larger than the sum of the four and one arm exponents.
\end{remark} 

\begin{remark}
  In a parallel work, Manolescu and Gassmann~\cite{GassMan} use exploration to prove Theorem~\ref{thm:polyarm} above for the square lattice. Their argument is different from ours and extends naturally to FK percolation, while our argument relies on noise sensitivity results (currently) restricted to Bernoulli percolation. On the other hand, we expect that the method developed in the present paper can be applied to more general events than the one-arm event since we rely on abstract results for Boolean functions.
  
\end{remark}
In their proof of the existence of exceptional times on the  hexagonal lattice using randomised algorithms, Schramm and Steif \cite{MR2630053} used ~\eqref{thm:polyarm} as an input. Their proof did not extend to bond percolation on the square lattice because~\eqref{eq:2} was not known in this setting. Our result shows that the Schramm-Steif method also applies to critical bond percolation on the square lattice.

The existence of exceptional times for critical bond percolation on the square lattice is not new. This result was obtained by Garban, Pete and Schramm \cite{MR2736153} using a different method.

Our second result is related to Reimer's main lemma~\cite[Lemma 4.1]{MR1703130}, which  asserts that for all events $A$, $B$
\begin{equation}\label{eq:4}
    \mathbb P[A\circ B]\le \mathbb P[A\cap \overline B],
\end{equation}
where $A\circ B$ denotes the event that $A$ and $B$ occur disjointly, and $\overline B$ is the image of $B$ through the mapping  $\omega\mapsto \overline \omega$ that swaps the colours white and black (see Section~\ref{sec:quant-reim-lemma} for more precise definitions). Applying Reimer's lemma, we get
\begin{equation}
    \mathbb P[A_n(b)\circ A_n(b)]\le \mathbb P[A_n(b)\cap A_n(w)].
\end{equation}
Beffara and Nolin \cite{MR2857240} proved the following polynomial improvement on the inequality above.

\begin{theorem}\label{thm:monoarm}
     There exists $c>0$ such that for all $n\geq 10$ we have $$\p[{A}_n(b)\circ A_n(b)]\leq n^{-c}\cdot \p[{A}_n(w)\cap A_n(b))].$$
   \end{theorem}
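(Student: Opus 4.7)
I would apply the same framework used to prove Theorem~\ref{thm:polyarm} — a Boolean-function / noise-sensitivity strengthening of a correlation inequality — now to Reimer's inequality~\eqref{eq:4} in place of Harris--FKG. The first step is to expose a quantitative ``gap'' in Reimer's bound. Setting $f = \mathbbm{1}_{A_n(b)}$ and using the colour-swap identification $\overline{A_n(b)}=A_n(w)$, Reimer's lemma gives $\p[A_n(b)\circ A_n(b)] \le \p[A_n(b) \cap A_n(w)]$, and I would introduce a nonnegative slack quantity
\[
G_n \;:=\; \p[A_n(b)\cap A_n(w)] - \p[A_n(b)\circ A_n(b)]
\]
to be bounded below. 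Any configuration in $A_n(b)\cap A_n(w)\setminus A_n(b)\circ A_n(b)$ must exhibit a ``bottleneck'' hexagon lying on every pair of would-be disjoint witnesses for the two arm events, so a natural way to produce a handle on $G_n$ is to parametrise it by the probability that all witnessing arms are forced through a single common hexagon.

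The second step is to lower bound $G_n$ by a polynomial. By Schramm--Steif, the one-arm event is certified by a randomised algorithm with polynomial revealment, and hence $f$ is polynomially noise sensitive. In the Boolean-function framework used for Theorem~\ref{thm:polyarm}, polynomial noise sensitivity translates into the fact that the ``pivotal mass'' of $f$ is spread over polynomially many hexagons, so with polynomial probability the black and white witnesses for $A_n(b)\cap A_n(w)$ are genuinely linked through a common bottleneck, preventing the disjoint-witness certification required for $A_n(b)\circ A_n(b)$. This should yield $G_n \ge n^{-c'}\p[A_n(b)\cap A_n(w)]$ for some $c'>0$. Combined with Reimer's lemma, one obtains $\p[A_n(b)\circ A_n(b)] \le (1 - n^{-c'})\p[A_n(b)\cap A_n(w)]$, which is already the sought polynomial improvement (and can be boosted, if needed, by iterating the constant-factor gain across $\Theta(\log n)$ dyadic annuli using RSW-based quasi-multiplicativity for both the monochromatic and polychromatic two-arm events).

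\textbf{Main obstacle.} The hard part is the first step: producing an analytically tractable lower bound on the Reimer gap $G_n$. Reimer's original combinatorial argument (matching chord-paths in the hypercube) is not obviously quantitative, and one must revisit it through the randomised-algorithm / OSSS-type lens underlying the proof of Theorem~\ref{thm:polyarm} in order to extract a gap term that the Schramm--Steif noise-sensitivity input can actually act on. The second step, by contrast, is a comparatively routine application of polynomial revealment once the gap is correctly formulated. Finally, the restriction to the hexagonal lattice in Theorem~\ref{thm:monoarm} should come exactly from the use of the colour-swap symmetry $\overline{A_n(b)}=A_n(w)$ at $p=1/2$, which is the natural critical regime for hexagonal face percolation and is what allows Reimer's inequality to be applied to $A=B=A_n(b)$ in the first place.
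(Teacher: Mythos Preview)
Your plan has two concrete gaps.

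First, you treat $A_n(b)\cap A_n(w)$ as if it contained $A_n(b)\circ A_n(b)$, describing ``configurations in $A_n(b)\cap A_n(w)\setminus A_n(b)\circ A_n(b)$'' as those exhibiting a bottleneck. But these events are not nested: a configuration with two disjoint black arms need have no white arm, and a configuration with one black and one white arm need not have two black arms. Reimer's lemma compares their \emph{probabilities}, not the events themselves; $G_n$ is not the probability of a set difference, so your bottleneck picture does not describe it, and there is no evident route from noise sensitivity of $\mathrm 1_{A_n(b)}$ to a lower bound on $G_n$.

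Second, even granting $G_n\ge n^{-c'}\p[A_n(b)\cap A_n(w)]$, you obtain only $\p[A_n(b)\circ A_n(b)]\le(1-n^{-c'})\p[A_n(b)\cap A_n(w)]$, and $1-n^{-c'}$ is not $n^{-c}$. Your proposed boost over $\Theta(\log n)$ dyadic annuli would require a \emph{constant}-factor gain at each scale; a scale-dependent factor $(1-k^{-c'})$ yields a convergent product, not polynomial decay. So even the ``routine'' second step does not close.

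The paper sidesteps both issues by building in the multiplicative structure from the outset. Proposition~\ref{prop:2} is an exact interpolation identity $\mathrm P[A\circ B]=\mathrm P[A\cap\overline B]\,e^{-J}$, with $J=\tfrac12\int_0^1\sum_x \E[D_xF(\omega,\omega_t)]/\E[F(\omega,\omega_t)]\,dt\ge 0$; thus ``quantitative Reimer'' becomes the lower bound $J\gtrsim\log n$, and the exponential converts this directly into $n^{-c}$. The logarithm is assembled scale by scale via Lemma~\ref{lem:mainlemdis}, whose proof is geometric (interlaced black/white circuits forcing $x$ to be doubly pivotal, together with stability of dynamical four-arm probabilities at times near $1$) rather than via OSSS/revealment. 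The analytically tractable ``Reimer gap'' you identify as the main obstacle is exactly the content of Proposition~\ref{prop:2}, and it is obtained by interpolation, not by revisiting Reimer's combinatorial matching.
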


We give a new proof of this result that is conceptually different. Beffara and Nolin  used the standard  Reimer's lemma as in Equation~\eqref{eq:4} but applied  it to different arm events, where  arms ``wind a lot'' around the origin. The polynomial improvement in their approach comes from a clever comparison between probability of arm events and their winding versions. In our approach, we work with standard arm events  but rely on a new  quantitative version of Reimer's lemma.  

\begin{remark}
Beffara and Nolin \cite[Theorem 5]{MR2857240} actually proved a more general version of Theorem~\ref{thm:monoarm} for arbitrarily many monochromatic arms. Our proof also applies to this statement with slight modification but for simplicity we chose to present the proof for two arms only. 
\end{remark}

\paragraph{Acknowledgment} This project was initiated with Christophe Garban, and we owe him a lot. He told us about the two-arm versus one-arm squared problem and its original motivation. We particularly thank him for several discussions, which lead to  key arguments at the early stage of the project. We also thank Hugo Vanneuville for discussions and his encouragement to apply the method to monochromatic arm probabilities. 

 This project has received funding from the European Research Council (ERC) under the European Union’s Horizon 2020 research and innovation programme (grant agreement No 851565).

\section{Quantitative correlation inequalities}
\label{sec:quant-corr-ineq}

In this section, we provide a unified proof of quantitative versions of the Harris-FKG inequality and Reimer's lemma, via interpolation. Interpolation has been used extensively to study  Harris-FKG inequality (see references below).  A novelty in our presentation is the use of a two variable function, which allows us to simultaneously treat both Harris-FKG inequality and Reimer's lemma.

The interpolation formulas we obtain in this section follow from general formulas about Markov semi-groups, see for instance \cite[Section 1.7.1]{MR3155209}. We also mention that in \cite{MR3534069} the authors use interpolation via noise to study quantitative versions of the Harris-FKG inequality specifically. The work \cite{MR4574826} uses this formula in percolation to give another proof of sharp noise sensitivity. 

The study of quantitative versions of the Harris-FKG inequality goes back to Talagrand~\cite{MR1401897}, in which the author derives an error term in the Harris-FKG inequality for general events on the Boolean cube. In~\cite{MR3534069}, the authors improve the error term for events satisfying certain symmetries, and in~\cite{MR3269987}, the authors study analogues of Talagrand's work in Gaussian space. These three papers study various quantitative versions of the FKG inequality for generic events of the Boolean cube. The emphasis in this paper is on two dimensional percolation, where the error term can be interpreted geometrically.
\subsection{Notation}
\label{sec:notation}

Let $n\in \mathbb N$. We use the notation $[n]$ for the set $\{1,\ldots, n\}$.  We define a configuration to be an element $x\in\{0,1\}^n$ and for $i\in [n]$ we write $x(i)$ for the  $i$th coordinate of $x$. We let $x^i$ (resp.\@ $x_i$) be the configuration obtained by replacing the $i$th coordinate of $x$  with $1$ (resp. with $0$).   We write $\overline x$ for the configuration defined by $\overline x(i)=1- x(i)$ for every $i\in[n]$.  For $A\subset \{0,1\}^n$, we  set
\begin{equation}
    \overline A=\{\overline x: x\in A\}.
\end{equation} 
Let $f:\{0,1\}^n\rightarrow \R$ be a function. For each $i\in [n]$, we define the operator $\nabla_i$ by $$\nabla_i f(x)=f(x^{i})-f(x_{i}).$$ We say that $f$ is increasing if for all $x,\,y\in \{0,1\}^n$ we have the implication
\begin{equation}
  \label{eq:5}
  \big(\forall i\quad  x(i)\le y(i)\big ) \implies \big(f(x)\le f(y)\big),
\end{equation}
and we say $f$ is decreasing if $-f$ is increasing. We say that an  event $A$ is increasing (resp.\@ decreasing) if its indicator function $\mathrm 1_A$ is increasing (resp.\@ decreasing).

We equip the hypercube $\{0,1\}^n$ with the uniform probability measure $\mathrm P$ and write~$\mathrm E$ for the corresponding expectation. Fix $t\in [0,1]$. We consider two coupled random configurations $\omega$ and $\omega_t$ defined as follows. Let $\omega$ be uniformly distributed in $\{0,1\}^n$. Independently, let $\eta=(\eta(i))_{i\in[n]}$ be an iid vector, where $\eta(i)$ is a Bernoulli random variable with parameter $t$ and set
  \begin{equation}
        \omega_t(i)=
    \begin{cases}
      \omega(i)&\eta(i) = 0\\
      1-\omega(i) &\eta(i)=1.
    \end{cases}
  \end{equation}
  In other words, $\omega_t$ is obtained by flipping each coordinate of $\omega$ independently with probability $t$. The configuration $\omega_t$ allows us to continuously interpolate between $\omega$ (when $t=0$), an independent copy (when $t=1/2$), and $\overline\omega$ (when $t=1$). We will repeatedly use the following elementary facts.
  
  \begin{lemma} 
    \label{lem:1}
    \phantom{ }
  
    \begin{enumerate}
    \item For every $t\in [0,1]$, $\omega_t$ has distribution $\mathrm P$.
      
    \item The two configurations  $\omega$ and $\omega_{1/2}$ are independent.
    \end{enumerate}

\end{lemma}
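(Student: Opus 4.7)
The plan is to reduce both statements to single-coordinate computations, using that the construction of $\omega_t$ is coordinatewise. Because $\omega$ has iid uniform coordinates, $\eta$ has iid Bernoulli$(t)$ coordinates, and $\omega$ is independent of $\eta$, the pairs $(\omega(i),\eta(i))_{i\in[n]}$ are mutually independent. Since $\omega_t(i)$ is a deterministic function of $(\omega(i),\eta(i))$, the same holds for the pairs $(\omega(i),\omega_t(i))_{i\in[n]}$. It therefore suffices to verify the two claims coordinate by coordinate.

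For the first part, I would write $\omega_t(i)=\omega(i)\oplus\eta(i)$ (addition modulo $2$) and directly compute
\[
\mathrm P[\omega_t(i)=0]=\mathrm P[\omega(i)=0]\,\mathrm P[\eta(i)=0]+\mathrm P[\omega(i)=1]\,\mathrm P[\eta(i)=1]=\tfrac{1}{2}(1-t)+\tfrac{1}{2}t=\tfrac{1}{2}.
\]
So each coordinate of $\omega_t$ is uniform on $\{0,1\}$, and coordinatewise independence then upgrades this to $\omega_t\sim\mathrm P$.

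For the second part, I would specialise to $t=1/2$ and compute the joint law of $(\omega(i),\omega_{1/2}(i))$: for any $(a,b)\in\{0,1\}^2$, the event $\{\omega(i)=a,\,\omega_{1/2}(i)=b\}$ coincides with $\{\omega(i)=a,\,\eta(i)=a\oplus b\}$ and thus has probability $\tfrac{1}{4}$. This joint law factorises as the product of two uniform marginals, so $\omega(i)$ and $\omega_{1/2}(i)$ are independent. Combined with the mutual independence of the pairs $(\omega(i),\omega_{1/2}(i))$ across $i$, this promotes to independence of the full vectors $\omega$ and $\omega_{1/2}$. There is no real obstacle here; the only mild conceptual point is that, although $\omega_{1/2}$ is a deterministic function of $(\omega,\eta)$, it is nevertheless independent of $\omega$, which reflects the fact that the map $(a,c)\mapsto(a,\,a\oplus c)$ is a measure-preserving bijection on $\{0,1\}^2$ equipped with the uniform measure.
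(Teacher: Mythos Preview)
Your proof is correct and follows essentially the same approach as the paper: reduce to single coordinates using the product structure, then compute the one-bit distributions directly. The only cosmetic difference is that you check all four joint probabilities $\mathrm P[\omega(i)=a,\omega_{1/2}(i)=b]=1/4$, whereas the paper checks only the case $(a,b)=(1,1)$ and implicitly relies on the already-established marginals to pin down the rest.
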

  \begin{proof}Let $t\in [0,1]$.
    Since both $\omega$ and $\eta$ are iid vectors, $\omega_t$ is also an iid vector and for every $i$, we have
    \begin{equation}
            \mathbb P[\omega_t(i)=1]=\mathbb P[\omega(i)=1,\eta(i)=0]+\mathbb P[\omega(i)=0,\eta(i)=1]=\frac12 \ t+\frac12\ (1-t)=\frac12.
    \end{equation}
    This proves the first item.  The second item follows from the following computation. When $t=1/2$, for every $i\in[n]$, we have
    \begin{equation}
      \mathbb P[\omega(i)=1,\omega_{1/2}(i)=1]=\mathbb P[\omega(i)=1,\eta(i)=0]=\frac14=\mathbb P[\omega(i)=1]\mathbb [\omega_{1/2}(i)=1].
    \end{equation}
     \end{proof}

\subsection{Quantitative Harris-FKG inequality}

\begin{proposition}
  \label{prop:1}
  Let $f, g:\{0,1\}^n\to \mathbb R_+$ be two non-negative functions with \mbox{$\mathrm E[f],\mathrm E[g]>0$}. Then we have
  
  \begin{equation}
      \mathrm E[f g] =\mathrm  E[f]\mathrm E[g]e^{I},\quad\text{where }I=      \frac 12 \int_0^{1/2} \sum_{i=1}^n\frac{\mathbb E[\nabla_if(\omega)\nabla_ig(\omega_t)]}{\mathbb E[f(\omega)g(\omega_t)]}\; dt. 
    \end{equation}

  \end{proposition}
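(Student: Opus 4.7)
The plan is to consider the interpolation $\phi(t) = \mathbb E[f(\omega) g(\omega_t)]$ for $t \in [0, 1/2]$. By Lemma~\ref{lem:1}, $\phi(0) = \mathbb E[fg]$ (since $\omega_0 = \omega$) and $\phi(1/2) = \mathbb E[f]\mathbb E[g]$ (by independence of $\omega$ and $\omega_{1/2}$). Hence the target identity $\mathbb E[fg] = \mathbb E[f]\mathbb E[g]e^I$ is equivalent to $\log \phi(0) - \log \phi(1/2) = I$, which will follow from the fundamental theorem of calculus once I compute $\phi'$. The main task therefore reduces to proving
\begin{equation}
\phi'(t) = -\frac{1}{2}\sum_{i=1}^n \mathbb E[\nabla_i f(\omega) \nabla_i g(\omega_t)].
\end{equation}

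For each coordinate $i$, the pair $(\omega(i), \omega_t(i))$ has a joint law $\mu_t$ on $\{0,1\}^2$ with $\mu_t(a,a) = (1-t)/2$ and $\mu_t(a, 1-a) = t/2$, and these laws are independent across $i$. Writing $\phi(t) = \sum_{x, y} f(x) g(y) \prod_j \mu_t(x_j, y_j)$ and differentiating via the product rule, the $i$th contribution involves the one-coordinate derivative $\mu_t'$, which equals $-1/2$ on the diagonal and $+1/2$ off the diagonal. A short calculation shows that summing $f(x)g(y)\mu_t'(x_i, y_i)$ over the pair $(x_i, y_i) \in \{0,1\}^2$ factors as $-\frac{1}{2}\nabla_i f(x) \nabla_i g(y)$, using that $\nabla_i f$ and $\nabla_i g$ depend only on the coordinates other than $i$. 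Integrating the remaining factor $\prod_{j \neq i} \mu_t(x_j, y_j)$ against this expression recovers $-\frac{1}{2}\mathbb E[\nabla_i f(\omega) \nabla_i g(\omega_t)]$, and summing over $i$ gives the claim.

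For the final step, note that for $t \in (0, 1/2]$ the joint law of $(\omega, \omega_t)$ has full support, so $\phi(t) > 0$ provided $\mathbb E[f], \mathbb E[g] > 0$; assuming further $\mathbb E[fg] > 0$, the function $\phi$ is strictly positive on all of $[0, 1/2]$, and integrating $-\phi'/\phi$ from $0$ to $1/2$ directly produces $I$. Exponentiation concludes (the degenerate case $\mathbb E[fg] = 0$ is handled separately, with the convention $e^{-\infty} = 0$). The only technical care needed lies in the derivative computation above, but thanks to the product structure of the joint law and the clean emergence of the discrete differences $\nabla_i$ from the alternating signs of $\mu_t'$, this is a routine combinatorial step rather than a real obstacle.
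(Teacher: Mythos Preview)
Your proof is correct and follows essentially the same route as the paper: define $\phi(t)=\mathbb E[f(\omega)g(\omega_t)]$, identify the endpoints via Lemma~\ref{lem:1}, compute $\phi'(t)=-\tfrac12\sum_i\mathbb E[\nabla_i f(\omega)\nabla_i g(\omega_t)]$, and integrate $\phi'/\phi$. The only organizational difference is that the paper packages the derivative computation into the general two-variable formula of Lemma~\ref{lem:mainformula} (applied to $F(x,y)=f(x)g(y)$, so that $\nabla_{ii}F=\nabla_i f\,\nabla_i g$), whereas you carry out the same calculation directly via the explicit joint law $\mu_t$; the content is identical.
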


Note that Proposition~\ref{prop:1} implies the Harris-FKG inequality because $\nabla_if\ge 0$ and $\nabla_ig\ge 0$ when $f,g$ are increasing. We prove Proposition~\ref{prop:1} in Section~\ref{sec:unified-proof-harris}, where we establish the  differential equation
  \begin{equation}
     \frac{d}{dt} \mathbb E[f(\omega)g(\omega_t)]=-\frac12 \sum_{i=1}^n\mathbb E[\nabla_if(\omega)\nabla_ig(\omega_t)].\label{eq:6}
  \end{equation}
We then obtain the equation displayed in Proposition~\ref{prop:1} by integration.

  \begin{remark}\label{rem:4}
When $f=g$ and $t\in [0,1/2]$,  it is possible to prove that the right-hand-side of~\eqref{eq:6} is non-positive   (see the second part of the proof of~\cite[Lemma 3.4]{MR4574826} for example), which implies that $\E[f(\omega)f(\omega_t)]$ is non-increasing for $t\in [0,1/2]$. 
  \end{remark}

\subsection{Quantitative Reimer's lemma}
\label{sec:quant-reim-lemma}

Let $A\subset \{0,1\}^n$ be an event, let $x\in \{0,1\}^n$ and let $I\subset [n]$. We say $(x,I)$ is a witness for $A$ if $\{y\in\{0,1\}^n:y\vert_ I=x\vert_I\}\subset A$. Let $B\subset \{0,1\}^n$ be another event and let $y\in\{0,1\}^n$. We say $(A,B)$ occurs disjointly on $(x,y)$ if there exist disjoint subsets $I,J\subset [n]$ such that $(x,I)$ is a witness for $A$ and $(y,J)$ is a witness for $B$. This generalises the standard notion of disjoint occurrence, in the sense that
\begin{equation}
     \text{$(A,B)$ occurs disjointly on $(x,x)$} \iff x\in A\circ B, 
\end{equation}
for every configuration $x$. We will also use the following property. 
\begin{lemma}
  \label{lem:2}
   For all increasing events  $A,B$ and for all $x\in\{0,1\}^n$ we have
  \begin{equation}
        \text{$(A,B)$ occurs disjointly on $(x,\overline x)$}\iff  x\in A\cap \overline B.
  \end{equation}
  \end{lemma}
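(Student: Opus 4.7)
The plan is to prove the two implications separately, observing that the forward direction is essentially immediate from the definitions and does not even require monotonicity, while the reverse direction is where the increasing hypothesis is used, via a canonical choice of witnessing sets.

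For the forward direction, I assume that $(A,B)$ occurs disjointly on $(x,\overline x)$, so there exist disjoint $I,J\subset[n]$ such that $\{z:z\vert_I=x\vert_I\}\subset A$ and $\{z:z\vert_J=\overline x\vert_J\}\subset B$. Taking $z=x$ in the first inclusion yields $x\in A$, and taking $z=\overline x$ in the second yields $\overline x\in B$, i.e.\ $x\in\overline B$. Hence $x\in A\cap\overline B$.

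For the reverse direction, assume $x\in A\cap\overline B$, so that $x\in A$ and $\overline x\in B$. The natural choice is
\begin{equation}
I=\{i\in[n]:x(i)=1\}\quad\text{and}\quad J=\{i\in[n]:x(i)=0\}=\{i\in[n]:\overline x(i)=1\}.
\end{equation}
These sets are disjoint (in fact they partition $[n]$). Any configuration $z$ with $z\vert_I=x\vert_I$ satisfies $z(i)=1=x(i)$ for $i\in I$, and $z(i)\ge 0=x(i)$ for $i\notin I$, so $z\ge x$ coordinatewise. Since $A$ is increasing and $x\in A$, this forces $z\in A$, showing that $(x,I)$ is a witness for $A$. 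The same argument applied to $\overline x\in B$ with the set $J$ shows that $(\overline x,J)$ is a witness for $B$. Hence $(A,B)$ occurs disjointly on $(x,\overline x)$.

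There is no real obstacle here; the only subtlety is making sure the coordinatewise domination argument is set up so that both the ``fixed" coordinates in $I$ and the ``free" coordinates outside $I$ are compatible with the monotonicity of $A$, and this is precisely why $I$ is chosen to be the support of $x$ (symmetrically for $J$ and $\overline x$).
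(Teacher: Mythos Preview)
Your proof is correct and follows essentially the same approach as the paper: the forward direction is immediate, and for the reverse direction you choose $I=\{i:x(i)=1\}$ and $J=I^c$ and use monotonicity to verify that $(x,I)$ and $(\overline x,J)$ are witnesses. The paper's argument is identical in structure, just slightly more terse.
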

  \begin{proof}
    If $(A,B)$ occurs disjointly on $(x,\overline x)$, then $x\in A$ and $\overline x\in B$, that is, $x\in A\cap \overline B$. For the other implication, assume  that $x\in A\cap \overline B$. Define $I=\{i\in[n]\: : \: x(i)=1\}$ and $J=I^c$. Since $A$ is increasing, $(x,I)$ is a witness for $A$. Similarly, since $\overline B$ is decreasing, $(x,J)$ is a witness for $\overline B$, or equivalently, \@ $(\overline x,J)$ is a witness for $B$.   Therefore, $(A,B)$ occurs disjointly in $(x,\overline x)$. 
  \end{proof}

Given a function $F:\{0,1\}^n\times\{0,1\}^n\to \mathbb R$, we define
\begin{equation}\label{eq:defD_i}
   D_iF(x,y)=\left[F(x^{i},y_{i})-F(x_{i},y_{i})\right]\cdot\left[ F(x_{i},y^{i})-F(x_{i},y_{i})\right].
\end{equation}

\begin{proposition}\label{prop:2}
    Let $A, B$ be two increasing events such that $\mathrm P[A\circ B]>0$. For each $x,y\in \{0,1\}^n$, let $F(x,y)$ be the indicator  function that 
  $(A,B)$ occurs disjointly in $(x,y)$. Then we have
  \begin{equation}
    \mathrm P[A\circ B]=\mathrm P[A\cap \overline B] e^{-J},\quad\text{ where } J= \frac12 \int_0^{1} \sum_{i=1}^n\frac{\mathbb E[D_iF(\omega,\omega_t)]}{\mathbb E[F(\omega,\omega_t)]} \; dt.
  \end{equation}
\end{proposition}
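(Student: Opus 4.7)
The plan is to mimic the interpolation approach of Proposition~\ref{prop:1}, applied to $\phi(t)=\mathbb E[F(\omega,\omega_t)]$ in place of $\mathbb E[f(\omega)g(\omega_t)]$. The two endpoints identify immediately: $\phi(0)=\mathrm P[A\circ B]$ because $\omega_0=\omega$ and $F(x,x)=1$ if and only if $x\in A\circ B$; and $\phi(1)=\mathrm P[A\cap\overline B]$ because $\omega_1=\overline\omega$ and Lemma~\ref{lem:2} gives $F(x,\overline x)=1$ if and only if $x\in A\cap\overline B$. It thus suffices to establish the differential equation
\[
\phi'(t)=\frac12\sum_{i=1}^n \mathbb E[D_iF(\omega,\omega_t)],
\]
after which dividing by $\phi$ and integrating from $0$ to $1$ yields the claimed exponential formula.

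To compute $\phi'(t)$, I fix $i$ and condition on all coordinates $(\omega(j),\omega_t(j))_{j\ne i}$: the pair $(\omega(i),\omega_t(i))$ takes values $(0,0)$ and $(1,1)$ with probability $(1-t)/2$ each and $(0,1)$ and $(1,0)$ with probability $t/2$ each, so the conditional $t$-derivative of $F(\omega,\omega_t)$ equals
\[
\tfrac12\bigl[F(x^i,y_i)+F(x_i,y^i)-F(x_i,y_i)-F(x^i,y^i)\bigr],
\]
where $(x,y)=(\omega,\omega_t)$. The main step is to identify this expression with $\tfrac12 D_iF(x,y)$. This identity does \emph{not} hold for a general increasing $\{0,1\}$-valued $F$, and it is precisely where the disjoint-occurrence structure of $F$ must be used.

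I expect the main obstacle to be the following combinatorial implication, which plays the role of an extra monotonicity property of $F$:
\[
F(x^i,y^i)=1 \ \Longrightarrow\ F(x^i,y_i)=1\ \text{ or }\ F(x_i,y^i)=1.
\]
Its proof is short: if $(I,J)$ are disjoint witnesses for $(A,B)$ on $(x^i,y^i)$, then $i\notin I$ or $i\notin J$; if $i\notin I$ then $(x_i,I)$ remains a witness for $A$ (the witness property depends only on $x|_I$), giving $F(x_i,y^i)=1$, and symmetrically when $i\notin J$. Combined with the monotonicity of $F$ in each argument (inherited from that of $A$ and $B$, by the same witness-preservation observation), a direct check of the at most five admissible patterns of $(F(x_i,y_i),F(x^i,y_i),F(x_i,y^i),F(x^i,y^i))$ identifies the conditional derivative above with $\tfrac12 D_iF(x,y)$. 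Since $D_iF\ge 0$ by monotonicity, $\phi$ is then non-decreasing on $[0,1]$ and stays bounded below by $\mathrm P[A\circ B]>0$, which legitimates the division in the final integration step.
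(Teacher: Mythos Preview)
Your proof is correct and follows essentially the same route as the paper: identify the endpoints $\phi(0)$ and $\phi(1)$, compute $\phi'(t)=-\tfrac12\sum_i\mathbb E[\nabla_{ii}F(\omega,\omega_t)]$ (the paper does this via Lemma~\ref{lem:mainformula}, you via direct conditioning), and then prove $-\nabla_{ii}F=D_iF$ using the key disjointness observation that $F(x^i,y^i)=1$ forces $F(x^i,y_i)=1$ or $F(x_i,y^i)=1$. The only cosmetic difference is that the paper handles the identity $-\nabla_{ii}F=D_iF$ with a two-case split ($F(x_i,y_i)=1$ or $F(x^i,y^i)=0$, versus the complementary case) while you enumerate all five admissible $0/1$-patterns; the substance is identical.
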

Since $F$ is increasing, we have $D_iF\ge 0$, which implies that $J\ge 0$. In particular, the equation above directly implies Reimer's Lemma for increasing events.

We prove Proposition~\ref{prop:2} in Section~\ref{sec:unified-proof-harris} where we establish
\begin{equation}
  \label{eq:7}
  \frac d {dt} \mathbb E[F(\omega,\omega_t)]=-\frac12\sum_{i=1}^n\mathbb E[D_iF(\omega,\omega_t)].
\end{equation}

\subsection{A general  interpolation formula}

For all
$F:\{0,1\}^n\times\{0,1\}^n\to \mathbb R
$ and all  $i\in [n]$,  we define  $\nabla_{ii}F:\{0,1\}^n\times\{0,1\}^n\to\{0,1\}$  by setting 
\begin{equation}\label{eq:8}
\nabla_{ii}F(x,y)=F(x^{i},y^{i})+F(x_{i},y_{i})-F(x^{i},y_{i})-F(x_{i},y^{i})
\end{equation}
for every $x,y\in \{0,1\}^n$.
The operator $\nabla_{ii}$ can be understood as a second-order discrete partial derivative. Indeed, one can check that $$\nabla_{ii} =\nabla_i^1 \circ \nabla_i^2,$$
where $\nabla^1_i$ and $\nabla^2_i$  are defined by  $\nabla^1_i F(x,y)=F(x^{i},y)-F(x_{i},y),$ and $\nabla^2_i F(x,y)=F(x,y^{i})-F(x,y_{i}).$  

\begin{lemma}\label{lem:mainformula}
For every function $F:\{0,1\}^n\times \{0,1\}^n\rightarrow \R$, we have \begin{equation}\label{eq:9}
        \frac{d}{dt}\E[F(\omega, \omega_t)]=-\frac{1}{2}\sum_{i=1}^{n}{\E[\nabla_{ii} F(\omega, \omega_t)]}.
    \end{equation}
    
\end{lemma}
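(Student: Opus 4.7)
The plan is to differentiate directly, isolating the contribution of each coordinate via an auxiliary vector-valued parameter.

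For $\vec t = (t_1, \ldots, t_n) \in [0,1]^n$, let $\eta(i) \sim \mathrm{Bern}(t_i)$ be independent, and define a random configuration $\omega_{\vec t}$ by $\omega_{\vec t}(i) = \omega(i) \oplus \eta(i)$. The function $\Psi(\vec t) := \E[F(\omega, \omega_{\vec t})]$ is a polynomial of degree at most one in each $t_i$, and satisfies $\Psi(t, \ldots, t) = \E[F(\omega, \omega_t)]$. By the chain rule,
\[
\frac{d}{dt} \E[F(\omega, \omega_t)] = \sum_{i=1}^n \frac{\partial \Psi}{\partial t_i}(t, \ldots, t).
\]

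To compute a single partial derivative, fix $i$ and condition on all coordinates of $\omega$ and $\omega_{\vec t}$ outside of $\{i\}$. Let $x, y \in \{0,1\}^n$ be any extensions of these partial configurations to $[n]$; their values at $i$ are irrelevant since only $x_i, x^i, y_i, y^i$ appear below. Conditionally, $(\omega(i), \omega_{\vec t}(i))$ takes the values $(0,0)$ and $(1,1)$ each with probability $\tfrac{1-t_i}{2}$ and the values $(0,1)$ and $(1,0)$ each with probability $\tfrac{t_i}{2}$, so the conditional expectation equals
\[
\tfrac{1-t_i}{2}\bigl[F(x_i, y_i) + F(x^i, y^i)\bigr] + \tfrac{t_i}{2}\bigl[F(x_i, y^i) + F(x^i, y_i)\bigr].
\]

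Differentiating this expression in $t_i$ yields
\[
\tfrac12\bigl[F(x_i, y^i) + F(x^i, y_i) - F(x_i, y_i) - F(x^i, y^i)\bigr] = -\tfrac12 \nabla_{ii} F(x, y),
\]
which by~\eqref{eq:8} coincides with $-\tfrac12 \nabla_{ii} F(\omega, \omega_{\vec t})$, since $\nabla_{ii} F$ does not depend on the $i$-th coordinates of its two arguments. Taking the outer expectation, evaluating at $\vec t = (t, \ldots, t)$, and summing over $i$ produces the claimed formula. There is no serious obstacle: the argument is a direct product-rule calculation, and the only care needed is to verify that the four signed terms reassemble into $\nabla_{ii} F$ with the correct sign, which is immediate from the definition.
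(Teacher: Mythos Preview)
Your proof is correct and follows essentially the same route as the paper: introduce the auxiliary parameters $t_1,\ldots,t_n$, apply the chain rule, and isolate the dependence on $t_i$ via the joint law of $(\omega(i),\omega_{\vec t}(i))$. The only cosmetic difference is that you condition on the coordinates outside $i$ before differentiating, whereas the paper expands the full expectation over the four values of $(\omega(i),\xi(i))$; the computations are identical.
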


\begin{proof}
  For every $t_1,\ldots,t_n\in[0,1]$, let  $\xi$ be a  configuration obtained  from $\omega$ by flipping the $i$-th coordinate   with probability $t_i$ (independently of $\omega(i)$ and the other coordinates) and  define  $\psi:[0,1]^n\rightarrow \R$ by $$\psi(t_1,\ldots, t_n)=\E[F(\omega, \xi)].$$  With this notation, we have $\psi(t,\ldots,t)=\E[F(\omega, \omega_t)]$ and by the chain rule we obtain 
  \begin{equation}
    \label{eq:10}
    \frac{d}{dt}\E[F(\omega, \omega_t)] = \sum_{i=1}^n \frac{\partial \psi}{\partial t_i}(t,\ldots,t).
\end{equation}
Fix $i\in[n]$. By summing  over the possible values  of $\omega(i)$ and $\xi(i)$, for every $t_1,\ldots, t_n$ we can  expand $\psi(t_1,\ldots, t_n)$ as
\begin{equation}
\frac{1-t_i}{2}\big(\E[F(\omega^{i},\xi^{i})]+\E[F(\omega_{i},\xi_{i})]\big)+\frac{t_i}{2}\big(\E[F(\omega^{i},\xi_{i})]+\E[F(\omega_{i},\xi^{i})]\big).
\end{equation}
Since each of the four expectation terms does not depend on $t_i$ we obtain
\begin{align}
  \frac{\partial \psi}{\partial t_i}(t_1,\ldots,t_n)&=-\frac{1}{2}\left(\E[F(\omega^{i},\xi^{i})]+\E[F(\omega_{i},\xi_{i})]-\E[F(\omega^{i},\xi_{i})]-\E[F(\omega_{i},\xi^{i})]\right)\\
  &=-\frac{1}{2}\E[\nabla_{ii} F(\omega, \xi)].
\end{align}
Applying this identity to $(t_1,\ldots,t_n)=(t,\ldots,t)$ we obtain
$$\frac{\partial \psi}{\partial t_i}(t,\ldots,t)=-\frac{1}{2}\E[\nabla_{ii} F(\omega, \omega_t)],$$ which, together with  \eqref{eq:10}, completes the proof.
\end{proof}

\subsection{A unified proof for Harris inequality  and Reimer's lemma}
\label{sec:unified-proof-harris}
We prove both Proposition~\ref{prop:1} and Proposition~\ref{prop:2}  using a ``duplication'' principle. For Proposition~\ref{prop:1}, we consider a continuous interpolation between $(\omega,\omega)$ and $(\omega,\xi)$, where  $\xi$ is an independent copy of $\omega$. For Proposition~\ref{prop:2}, we interpolate between $(\omega,\omega)$ and $(\omega,\overline \omega)$. In each case, we use Lemma~\ref{lem:mainformula} to control the microscopic variation in the continuous interpolation. 

\begin{proof}[Proof of Proposition~\ref{prop:1}]
 Let $f,g:\{0,1\}^n\to \mathbb R_+$ satisfying $\mathrm{E}[f], \mathrm{E}[g]>0$ and define \mbox{$\phi:[0,1/2]\to\mathbb R$} by 
  \begin{equation}
      \phi(t)=\mathbb E[f(\omega)g(\omega_t)].
  \end{equation}
The assumptions that $f,g$ are non-negative and $\mathrm{E}[f],\mathrm{E}[g]>0$ imply that $\phi(t)>0$ for every $t\in(0,1/2]$. By integrating $\frac d{dt}(\log \phi(t))=\phi'(t)/\phi(t)$ between $0$ and $1/2$ we obtain
\begin{equation}
  \label{eq:11} \phi(0)=\phi(1/2)\exp\left(-\int_0^{1/2}\frac{\phi'(t)}{\phi(t)}\, dt \right).
\end{equation}
  By definition, we have
  \begin{equation}
        \phi(0)=\mathbb E[f(\omega)g(\omega)]=\mathrm E[fg].
  \end{equation}
  By Lemma~\ref{lem:1}, $\omega_{1/2}$ is an independent copy of $\omega$, hence
  \begin{equation}
        \phi(1/2)= \mathbb E [f(\omega)] \mathbb E[g(\omega_{1/2})]=\mathrm E[f] \mathrm E[g].
  \end{equation}
  By applying   Lemma~\ref{lem:mainformula} to  $F(x,y)=f(x)g(y)$ and using  $\nabla_{ii}F(x,y)=\nabla_i f(x)\nabla_ig(y)$, we obtain 
  \begin{equation}
       \forall t\in [0,1/2]\quad \phi'(t)=-\frac12 \sum_{i\in[n]}\mathbb E[\nabla_if(\omega)\nabla_i g(\omega_t)].
  \end{equation}
Plugging the three displayed equations above in Equation~\eqref{eq:11}, we obtained the desired identity.
    \end{proof}

\begin{proof}[Proof of Proposition~\ref{prop:2}]
Let $A,B$ be two increasing events, and $F$ as in the statement of the proposition. Define $\psi:[0,1]\to \mathbb R$ by
\begin{equation}
 \psi(t)=\mathbb E[F(\omega,\omega_t)]. 
\end{equation}
The assumption $\mathrm P[A\circ B]>0$ implies that $\psi(t)>0$ for every $t\in[0,1]$.
By integrating $\frac d{dt}(\log \psi(t))=\psi'(t)/\psi(t)$ between $0$ and $1$, we obtain
  \begin{equation}
    \psi(0)=\psi(1)\exp\left(-\int_0^{1}\frac{\psi'(t)}{\psi(t)}\, dt \right).
  \end{equation}
  By definition,  we have
  \begin{equation}
       \psi(0)=\mathbb E[F(\omega,\omega)]=\mathrm P[A\circ B],
     \end{equation}
     and, by Lemma~\ref{lem:2},
     \begin{equation}
              \psi(1)=\mathbb E[F(\omega,\overline\omega)]=\mathrm P[A\cap\overline  B].
            \end{equation}
            By Lemma~\ref{lem:mainformula}, we have
            \begin{equation}
              \label{eq:12}
              \forall t\in [0,1]\quad \psi'(t)=-\frac12 \sum_{i=1}^n\mathbb E[\nabla_{ii}F(x,y)],   
            \end{equation}
            and, therefore, the desired identity would be proved if we show that 
            \begin{equation}
              -\nabla_{ii}F(x,y)=D_i F(x,y)\label{eq:13}
            \end{equation}
            for every  $i\in [n]$,  $x,y\in \{0,1\}^n $. (Recall the definition of $D_i$ from \eqref{eq:defD_i}).

            Let us fix $i\in [n]$, $x,y\in \{0,1\}^n$ and prove the expression~\eqref{eq:13}. We distinguish between two cases.

\begin{description}
\item{Case 1: $F(x_{i},y_{i})=1$ or $F(x^{i},y^{i})=0$.}

  Since $F$ is increasing, in this case all of the four terms in \eqref{eq:8} are equal. Hence, both  $\nabla_{ii} F(x,y)$ and $D_iF(x,y)$ are equal to  $0$.
  
\item{Case 2: $F(x_{i},y_{i})=0$ and $F(x^{i},y^{i})=1$.}

  In this case, \eqref{eq:13} can be rewritten as
  \begin{equation}
    \label{eq:14}
    F(x^{i},y_{i})+F(x_{i},y^{i})-1= F(x^{i},y_{i}) \cdot F(x_{i},y^{i}).
  \end{equation}
  Let $I,J\subset [n]$ be disjoint such that $(x^{i},I)$ is a witness for $A$ and $(y^{i},J)$ is a witness for $B$. Then either $I$ or $J$ does not contain $\{i\}$ so either $F(x_{i},y^{i})$ or $F(x^{i},y_{i})$ is also equal to $1$. This directly guarantees that~\eqref{eq:14} holds.

\end{description}

\end{proof}

\begin{remark}[Strong BK inequality]

By integrating~\eqref{eq:7} between $0$ and $1/2$ and writing $\xi=\omega_{1/2}$ we can deduce that for all increasing events $A,B$ 
\begin{equation}
  \label{eq:15}
  \mathrm P[A\circ B]\le \mathbb P[(A,B)\text{ occurs disjointly in }(\omega,\xi)].
\end{equation}
Since $\omega$ and $\xi$ are independent the right hand side is at most $\mathrm P[A]\mathrm P[B]$. In particular we recover the standard BK-inequality and see that~\eqref{eq:15} is a stronger form of BK-inequality. This inequality was already known; it follows easily from~\cite[Equation 2.6]{MR0814725}. Unlike Reimer's inequality, this inequality is false for general events, as remarked below Equation 2.8 in the same paper. 
    
\end{remark}

\begin{remark}[Dual Reimer inequality]
By integrating~\eqref{eq:7} between $1/2$ and $1$ and writing $\xi=\omega_{1/2}$ we get that for all increasing events $A,B$ that 

\begin{equation}
    \p[(A,B)\text{ occurs disjointly in }(\omega,\xi)]\leq P[A\cap \overline B].
\end{equation}
This inequality also holds for general events $A,B$ and is known as the dual Reimer inequality~\cite[Corollary 1.5]{MR2769191}.
\end{remark}

\subsection{FKG property of the noised pair}
\label{sec:fkg-property-noised}

We say   that a function $F:\{0,1\}^n\times\{0,1\}^n\to \mathbb R$  is increasing (resp.\@ decreasing), if it increasing (resp.\@ decreasing) when viewed as a function  on $\{0,1\}^{2n}$.

\begin{lemma}\label{lem:Holley}
Let $F,G:\{0,1\}^n\times \{0,1\}^n\rightarrow \R$ be both increasing or both decreasing. For every $t\in [0,1/2]$, we have  $$\E[F(\omega,\omega_t)G(\omega, \omega_t)]\geq \E[F(\omega, \omega_t)] \E[G(\omega, \omega_t)].$$ 
\end{lemma}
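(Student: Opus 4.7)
The plan is to view the pair $(\omega,\omega_t)$ as a random element of $\{0,1\}^{2n}$ equipped with the componentwise partial order and to deduce the claim from the standard FKG/Holley theorem applied to the joint law of this pair. First I would note that, by construction, the $n$ pairs $\bigl(\omega(i),\omega_t(i)\bigr)_{i\in[n]}$ are mutually independent, so the law $\nu_t$ of $(\omega,\omega_t)$ on $\{0,1\}^{2n}$ is the $n$-fold product of a single $2$-dimensional marginal $\mu_t$ on $\{0,1\}^2$.

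Second I would compute $\mu_t$ explicitly from the definition of $\omega_t$ in Section~\ref{sec:notation}:
\begin{equation}
  \mu_t(0,0)=\mu_t(1,1)=\frac{1-t}{2},\qquad \mu_t(0,1)=\mu_t(1,0)=\frac{t}{2}.
\end{equation}
The only pair of incomparable elements in $\{0,1\}^2$ is $\{(0,1),(1,0)\}$, with meet $(0,0)$ and join $(1,1)$, so the FKG lattice condition for $\mu_t$ reduces to the single inequality
\begin{equation}
  \mu_t(0,0)\,\mu_t(1,1)\;\ge\;\mu_t(0,1)\,\mu_t(1,0),
\end{equation}
i.e.\@ $(1-t)^2\ge t^2$, which holds precisely because $t\in[0,1/2]$. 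The role of the hypothesis $t\le 1/2$ is exactly to make each coordinate pair $(\omega(i),\omega_t(i))$ positively correlated; for $t>1/2$ the sign flips and the conclusion genuinely fails.

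Third, since the product of measures each satisfying the FKG lattice condition itself satisfies the lattice condition on the product lattice (by multiplying the inequalities coordinatewise), $\nu_t$ is log-supermodular on $\{0,1\}^{2n}$. The Holley/FKG theorem on a finite distributive lattice then yields
\begin{equation}
  \mathbb E[F(\omega,\omega_t)G(\omega,\omega_t)]\;\ge\;\mathbb E[F(\omega,\omega_t)]\,\mathbb E[G(\omega,\omega_t)]
\end{equation}
for every pair of increasing functions $F,G$ on $\{0,1\}^{2n}$. The case of two decreasing functions follows by applying the previous inequality to $-F$ and $-G$. I do not anticipate a serious obstacle: the argument reduces to a single elementary inequality on a two-point marginal combined with a standard invocation of FKG, and the two key observations (independence across $i$ and the identification of $t\le1/2$ as the precise range where the lattice condition holds) are both immediate from the definition of $\omega_t$.
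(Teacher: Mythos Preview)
Your proof is correct. You directly verify the FKG lattice condition for the law $\nu_t$ of $(\omega,\omega_t)$ on $\{0,1\}^{2n}$, reducing everything to the single inequality $(1-t)^2\ge t^2$, and then invoke the full FKG theorem for log-supermodular measures on a finite distributive lattice.

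The paper takes a different route: rather than checking the lattice condition for $\nu_t$, it constructs an auxiliary variable $\xi\in\{-1,0,1\}^n$ (independent of $\omega$, with $\mathbb P[\xi(i)=\pm1]=t$, $\mathbb P[\xi(i)=0]=1-2t$) and sets $\zeta(i)=0,\,\omega(i),\,1$ according to whether $\xi(i)=-1,\,0,\,1$. One then has $(\omega,\zeta)\overset{\mathrm{law}}{=}(\omega,\omega_t)$, and $\zeta$ is a coordinatewise \emph{increasing} function of $(\omega,\xi)$. Hence $F(\omega,\zeta)$ and $G(\omega,\zeta)$ are increasing functions of the product-measure pair $(\omega,\xi)$, and the conclusion follows from the ordinary Harris--FKG inequality for product measures. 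The trade-off is clear: your argument is shorter and more transparent about where $t\le 1/2$ enters, but it relies on the general FKG theorem (lattice condition $\Rightarrow$ positive association); the paper's monotone-coupling argument is slightly more hands-on but only needs Harris--FKG for independent coordinates, which is the version already stated and used elsewhere in the paper. Both arguments identify $t\le 1/2$ as the precise threshold, in your case via $(1-t)^2\ge t^2$, in the paper's case via the requirement $1-2t\ge 0$ needed to define the law of $\xi$.
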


\begin{proof} Fix $t\in [0,1/2]$. Without loss of generality, we assume that both $F$ and $G$ are increasing (the decreasing case can be deduced by considering $-F$, and $-G$).
  Let $\xi\in\{-1,0,1\}^n$ be  independent of $\omega$   with iid coordinates satisfying
  \begin{equation}
        \mathbb P[\xi(i)=-1]=\mathbb P[\xi(i)=1]=t \quad \mathbb P[\xi(i)=0]=1-2t.
  \end{equation}
  Consider the configuration $\zeta\in \{0,1\}^n$ defined by
  \begin{equation}
        \zeta(i)=
    \begin{cases}
      0&\text{if }\xi(i)=-1,\\
      \omega (i) &\text{if }\xi(i) =0,\\
               1&\text{if }\xi(i)=1.
    \end{cases}
  \end{equation}
 First, we have
  \begin{equation}
    \label{eq:16}
    (\omega,\zeta)\overset{\text{law}}=(\omega,\omega_t),
  \end{equation}
  which follows for the computations
  \begin{align}
        &\mathbb P[\omega(i)=\zeta(i)=0]= \mathbb P[\omega(i)=0,\xi(i)\neq 1]= \frac12 (1-t),\\
    &\mathbb P[\omega(i)=\zeta(i)=1]= \mathbb P[\omega(i)=1,\xi(i)\neq -1]= \frac12 (1-t),\\
    &\mathbb P[\omega(i)=0,\zeta(i)=1]= \mathbb P[\omega(i)=0,\xi(i)= 1]= \frac12 t.
  \end{align}
Furthermore,  it follows from the definition that $\zeta$ is an increasing function of $(\omega, \xi)$, which implies that $F(\omega,\zeta)$ and $G(\omega,\zeta)$ are also increasing in $\omega$ and $\xi$.   Therefore, by applying the Harris-FKG inequality   to the pair $(\omega, \xi)$, we obtain
  \begin{align}
   \E[F(\omega,\zeta)G(\omega,\zeta)]\ge\E[F(\omega,\zeta)] \mathbb E[G(\omega,\zeta)],
  \end{align}
  which, together with \eqref{eq:16}, concludes the proof.

\end{proof}

The events considered in this paper are not always monotone (for instance, the events related to pivotality). Nevertheless, we will be able to use correlation inequalities for such events, provided their support satisfy some constraints. More precisely, we will use Proposition~\ref{prop:3} below, which follows from Lemma~\ref{lem:Holley} and the product structure of the probability space. This Proposition is a dynamical version of~\cite[Lemma 3]{MR0879034} and follows directly from \cite[Corollary II.2.11]{MR2108619}. We provide a proof for the completeness. 

For a subset $S\subset [n]$, we say that a function $F:\{0,1\}^n\times \{0,1\}^n\to \mathbb R$ is $S$-increasing (resp. $S$-decreasing) if for all configurations $x,y\in\{0,1\}^n$
\begin{equation}
    \forall i\in  S\quad F(x^i,y) \ge  F(x_i,y)\text{ and } F(x,y^i) \ge  F(x,y_i).
\end{equation}

\begin{proposition}
  \label{prop:3}
  Fix $t\in [0,1/2]$.   Let $A,B,S,T\subset[n]$ pairwise disjoint. Let $F,G:\{0,1\}^n\times\{0,1\}^n\to \mathbb R$ such that
  \begin{itemize}
  \item $F(\omega,\omega_t)$ is  measurable w.r.t.\@ $\{(\omega(x),\omega_t(x))\:: \: x\in A\cup S\cup T\}$, 
  \item  $G(\omega,\omega_t)$ is measurable w.r.t.\@ $\{(\omega(x),\omega_t(x))\:: \: x\in B\cup S\cup T\}$,
  \item $F,G$ are both $S$-increasing and $T$-decreasing.
  \end{itemize}
  Then for every $t\in [0,1/2]$
  \begin{equation}
        \mathbb E[F(\omega,\omega_t)G(\omega,\omega_t)]\ge\mathbb E[F(\omega,\omega_t)]\mathbb E[G(\omega,\omega_t)]
  \end{equation}
\end{proposition}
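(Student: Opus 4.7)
The plan is to reduce to Lemma~\ref{lem:Holley} in two steps: first integrate out the $A$-coordinates of $F$ and the $B$-coordinates of $G$ to obtain functions on the product space indexed by $S\cup T$, then perform a measure-preserving flip on $T$ to convert ``$S$-increasing and $T$-decreasing'' into ``fully increasing'', which is the hypothesis of Lemma~\ref{lem:Holley}.

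Concretely, without loss of generality $F$ depends only on $(\omega(x),\omega_t(x))_{x\in A\cup S\cup T}$ and $G$ only on $(\omega(x),\omega_t(x))_{x\in B\cup S\cup T}$ (replacing them by the appropriate conditional expectations, which preserves $S$-increasingness and $T$-decreasingness). Since the pairs $(\omega(x),\omega_t(x))$ are independent across $x$ and $A,B,S,T$ are pairwise disjoint, conditional on the coordinates in $S\cup T$ the random variables $F(\omega,\omega_t)$ and $G(\omega,\omega_t)$ are independent. Defining $\tilde F,\tilde G:\{0,1\}^{S\cup T}\times\{0,1\}^{S\cup T}\to\mathbb R$ as the corresponding conditional expectations, the tower property yields $\mathbb E[FG]=\mathbb E[\tilde F(\omega|_{S\cup T},\omega_t|_{S\cup T})\,\tilde G(\omega|_{S\cup T},\omega_t|_{S\cup T})]$, while $\mathbb E[F]=\mathbb E[\tilde F]$ and $\mathbb E[G]=\mathbb E[\tilde G]$. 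Averaging preserves coordinate-wise monotonicity, so $\tilde F$ and $\tilde G$ remain $S$-increasing and $T$-decreasing.

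Next, let $\sigma:\{0,1\}^{S\cup T}\to\{0,1\}^{S\cup T}$ be the involution that flips the coordinates in $T$. The key observation is that, on each coordinate $i$, the joint law of $(\omega(i),\omega_t(i))$ is invariant under the simultaneous flip $(a,b)\mapsto(1-a,1-b)$ (both $(0,0)$-$(1,1)$ have probability $(1-t)/2$ and both $(0,1)$-$(1,0)$ have probability $t/2$). By the product structure, $(\sigma(\omega|_{S\cup T}),\sigma(\omega_t|_{S\cup T}))$ has the same distribution as $(\omega|_{S\cup T},\omega_t|_{S\cup T})$. Setting $\hat F=\tilde F\circ(\sigma,\sigma)$ and $\hat G=\tilde G\circ(\sigma,\sigma)$, these functions are now increasing in every coordinate of the index set $S\cup T$, and the distributional identity above gives $\mathbb E[\tilde F\tilde G]=\mathbb E[\hat F\hat G]$ together with $\mathbb E[\tilde F]=\mathbb E[\hat F]$ and $\mathbb E[\tilde G]=\mathbb E[\hat G]$.

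Finally, applying Lemma~\ref{lem:Holley} on the index set $S\cup T$ (the lemma's statement is phrased for $[n]$ but the proof works verbatim for any finite index set by the product structure) to the pair of increasing functions $\hat F,\hat G$ at the same $t\in[0,1/2]$ gives $\mathbb E[\hat F\hat G]\ge\mathbb E[\hat F]\mathbb E[\hat G]$, which chains with the identities above to produce the claimed inequality. The argument contains no genuine obstacle: the only thing that must be watched is that the symmetry used in the flip is the one that simultaneously flips $\omega$ and $\omega_t$, which is precisely the symmetry of the noise kernel, and that monotonicity in an $A$-independent direction is preserved by integrating out the $A$-coordinates.
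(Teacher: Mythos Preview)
Your proof is correct and close in spirit to the paper's, but the order of the reductions is different. The paper conditions in two nested steps: first on everything except the $S$-coordinates and applies Lemma~\ref{lem:Holley} there (using $S$-increasingness), then on everything except the $T$-coordinates and applies Lemma~\ref{lem:Holley} again (using $T$-decreasingness, handled via $-F,-G$), and only at the end invokes independence of the $A$- and $B$-parts. You instead use that independence first to reduce to functions of the $S\cup T$-coordinates, then exploit the symmetry of the noise kernel under the simultaneous flip $(a,b)\mapsto(1-a,1-b)$ to convert $T$-decreasing into $T$-increasing, and apply Lemma~\ref{lem:Holley} once on all of $S\cup T$. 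Your route is a bit more economical (one FKG application instead of two) but leans on the explicit flip symmetry of the coupling $(\omega,\omega_t)$; the paper's two-step conditioning avoids that symmetry and would carry over unchanged to any positively associated pair for which the analogue of Lemma~\ref{lem:Holley} holds.
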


\begin{proof} Without loss of generality, we may assume $A\cup B\cup S\cup T=[n]$. Set
  \begin{equation}
\mathcal{F}=\sigma\left( (\omega(i),\omega_t(i)), \: i\in A\cup B\cup T\right). 
\end{equation}
Using that $F$ and $G$ are $S$-increasing and applying Lemma~\ref{lem:Holley} to $(\omega,\omega_t)|_S$ , we obtain the following inequality of random variables:
  \begin{equation}
\E[F(\omega,\omega_t)G(\omega,\omega_t)|\mathcal{F}]\geq \E[F(\omega,\omega_t)|\mathcal{F}]\E[G(\omega,\omega_t)|\mathcal{F}]\quad{\text{a.s.}}\label{eq:17}
\end{equation}
Let $F_1, F_2:\{0,1\}^n\times\{0,1\}^n\to \mathbb R$ such that
\begin{equation}
  \E[F(\omega,\omega_t) |  \mathcal{F}]=F_1(\omega,\omega_t)  \quad \text{and} \quad   \E[G(\omega,\omega_t) |  \mathcal{F}] =G_1(\omega,\omega_t)  
\end{equation}
almost surely. Since $F_1$ and $G_1$ are averaged versions of $F$ and $G$, they are $T$-decreasing. Setting \begin{equation}
\mathcal{G}=\sigma\big( (\omega(x),\omega_t(x)), \: x\in A\cup B\cup S\big),
\end{equation}and applying Lemma~\ref{lem:Holley} to $(\omega,\omega_t)|_T$  as above, almost surely we have
\begin{align} \E[F_1(\omega,\omega_t)G_1(\omega,\omega_t)|\mathcal{G}]&\geq \E[F_1(\omega,\omega_t)|\mathcal{G}]\E[G_1(\omega,\omega_t)|\mathcal{G}]\\&=\mathbb E[F(\omega,\omega_t)|\mathcal F\cap \mathcal G] \mathbb E[G(\omega,\omega_t)|\mathcal F\cap \mathcal G].\label{eq:18}
\end{align}
Taking the expectation in~\eqref{eq:17} and \eqref{eq:18} gives
\begin{equation}
    \mathbb E[F(\omega,\omega_t)G(\omega,\omega_t)]\ge \mathbb E\left [\mathbb E[F(\omega,\omega_t)|\mathcal F\cap \mathcal G] \mathbb E[G(\omega,\omega_t)|\mathcal F\cap \mathcal G]\right].
\end{equation}
Since $\mathcal F\cap \mathcal G $ is  generated by the configuration in $A\cup B$, the measurability hypotheses  in the first two items imply that $\mathbb E[F(\omega,\omega_t)|\mathcal F\cap \mathcal G]$ and $\mathbb E[G(\omega,\omega_t)|\mathcal F\cap \mathcal G]$ are independent. Therefore, $$ \mathbb E[F(\omega,\omega_t)G(\omega,\omega_t)]\ge\mathbb E[F(\omega,\omega_t)]\mathbb E[G(\omega,\omega_t)],$$ completing the proof.
  \end{proof}

\section{Strict inequalities for arm exponents}

\subsection{Notation and background}
\label{sec:notation-1}

We use the notation $\gtrsim$, $\lesssim$ and $\asymp$ in subsequent sections. We write $f\gtrsim g$ if there exists an absolute constant $c>0$ such that $f\geq cg$. Analogously, we write $f\lesssim g$ if there exists an absolute constant $c>0$ such that $f\leq cg$. We write $f\asymp g$ if $f\gtrsim g$ and $g \lesssim f$.

Recall that $\mathbb H$ is the set of hexagons in the plane. Given a subset $S\subset \mathbb R^2$, we define
\begin{equation}
    \mathbb H(S)=\{z\in \mathbb H: z\cap S\neq\emptyset\}.
\end{equation}
For every $n\ge k\ge 1$, we define
\begin{equation}
    \Lambda_n= [-n,n]^2\quad \text{and}\quad \Lambda_{k,n}=\Lambda_n\setminus\Lambda_k. 
  \end{equation}
  Let $A,B,S\subset \mathbb{R}^2$.  We define a path in $S$ from $A$ to $B$ as a continuous and injective map $\gamma: [0,1] \to S$ such that $\gamma(0)\in A$ and $\gamma(1)\in B$, and we identify $\gamma$ with its image. 
  
In order to view percolation events as Boolean events we make the following association. Given a colouring of the hexagons, assign to each hexagon the value $1$ if it is black and the value $0$ if it is white. More precisely, for fixed $n\ge 1$, we define a percolation configuration in $\Lambda_n$ as an element $\omega\in \{0,1\}^{\mathbb H(\Lambda_n)}$. This allows us to use the notation from Section~\ref{sec:quant-corr-ineq}, and in particular, we write $\mathrm P$ for the uniform measure in $\{0,1\}^{\mathbb H(\Lambda_n)}$. We keep the size $n$ of the box $\Lambda_n$ defining the measure $\mathrm
P$ implicit, and by convention, we fix it large enough so that all the events considered are well defined.

\paragraph{Static  events} Given some $n\ge1$, we use the term `static event' for a standard percolation event, that is,  a subset $A\subset\{0,1\}^{\mathbb H(\Lambda_n)} $. 

\paragraph{\textbf{Static one-arm and 4-arm events}}Let $n\geq m \geq 1$ and let $\sigma\in \{w, b\}$. Let ${A}_{m,n}(\sigma)$  be the event that there exists a path  connecting $\partial \Lambda_m$ to $\partial \Lambda_n$ contained in $\Lambda_{m,n}$ of colour $\sigma$.
Let $A_{m,n}(wbwb)$ be the event that there exists four paths from $\partial \Lambda_m$ to $\partial \Lambda_n$ of alternating colours black-white-black-white. We also define ${A}_{n}(\sigma)$ and $A_n(wbwb)$ similarly by requiring that the corresponding paths start from the hexagon centred at  $0$ and end at $\partial \Lambda_n$. The probabilities  of four-arm events, denoted by
\begin{equation}
\alpha_{m,n}=\mathrm P  [{A}_{m,n}(wbwb)], \quad \alpha_{n}=\mathrm P[{A}_{n}(wbwb)],
\end{equation} 
will play an important role.

\paragraph{\textbf{Dynamical events}}
For every $t\in [0,1]$, we consider  a coupled pair $(\omega,\omega_t)$ of random percolation configurations,
as in Section~\ref{sec:notation}:  $\omega$ is a random percolation configuration chosen uniformly at random and $\omega_t$ is obtained by flipping each bit of $\omega$ with probability~$t$. 

Given $n\ge 1$ and $t\in[0,1]$, we use the phrase dynamical event for an event that is measurable with respect to the pair $(\omega,\omega_t)$. To distinguish such events from static events, we denote dynamical events with calligraphic letters. For instance, many dynamical events considered in this paper are of the form
\begin{align}
    \mathcal A=\{\omega \in A,\omega_t\in B\},
\end{align}
where $A$ and $B$ are two static events.

\paragraph{\textbf{Dynamical black-white arm event}} Let $t\in [0,1]$ and $n\ge1$. Define the black-white arm event as the event that there exists a  black  arm at time $0$ and a white arm at time~$t$. This event will be central in  the proof  of Theorem~\ref{thm:polyarm} and we denote  its probability by 
\begin{equation*}
        \phi_n(t)=\p[\omega\in {A}_n(b),\omega_t\in {A}_n(w)].
      \end{equation*}

      \paragraph{Dynamical disjoint arm event } Let $t\in [0,1]$. Define the dynamical disjoint arm event as  the event that $(A_n(b),A_n(b))$ occurs disjointly in $(\omega,\omega_t)$. In words, this corresponds to the existence of a black arm at time $0$ and a black  arm at time $t$ that is disjoint from the first arm. This event will be central in the proof of Theorem~\ref{thm:monoarm} and we denote its probability by
\begin{equation*}
        \psi_n(t)=\p[({A}_n(b), {A}_n(b))\text{ occurs disjointly in $(\omega,\omega_t)$}].
      \end{equation*}

\paragraph{\textbf{Dynamical four-arm event}} For $t\in[0,1]$ and $1\le k\le n$, we say that there is a dynamical four-arm event in $\Lambda_{k,n}$ if $\omega,\omega_t\in A_{k,n}(wbwb)$.

    \subsection{Background on critical planar percolation}

\paragraph{Russo-Seymour-Welsh estimate}

For every $\lambda>0$, there exists a constant $c=c(\lambda)>0$ such that
\begin{equation}
  \label{eq:19}
  \forall n\ge 1\quad \mathrm P[\text{$\exists$ a black path from left to right in $[0,\lambda n]\times[0,n]$}]\ge c(\lambda).  
\end{equation}
By the colour-switching symmetry, the same estimate is also true for white paths. This result was first obtained by \cite{MR0488383}, and Seymour and Welsh~\cite{MR0494572}. See also \cite{MR2283880}[Chapter 3] and \cite{MR1796851}[Chapter 11]. 

\paragraph{Quasi-multiplicativity of the static four-arm event}
 
For every $1\le k\le n$, we have
\begin{equation}
  \alpha_k\alpha_{k,n}\lesssim \alpha_n\le \alpha_k\alpha_{k,n}.
\end{equation}
We refer to \cite{MR0879034,MR2438816,MR2630053} for proofs of quasi-multiplicativity.

\paragraph{Lower bound on the static four-arm event}
There exists $c>0$ such that  for every $1\le k\le n$,
  \begin{equation}
        \alpha_{k,n}\ge \left(\frac k n\right )^{2-c}. 
    \end{equation}
    This estimate is consequence of the Russo-Seymour-Welsh estimate above, and is proved by comparing the four-arm and the five-arm event, see~\cite[Lemma 6.5]{MR3468568} for details. A useful  consequence  is  the following bound.
    \begin{equation}\label{eq:20}
      \forall n\ge1 \quad \sum_{i=1}^ni\alpha_i\asymp n^2\alpha_n.
    \end{equation}
Indeed, by quasi-multiplicativity, we have for all $n\geq 1$ that

\begin{align}
   \sum_{i=1}^n i \alpha_i =\alpha_n \sum_{i=1}^n \frac{i}{\alpha_n/\alpha_i}\asymp \alpha_n\sum_{i=1}^n \frac{i}{\alpha_{i,n}},
\end{align}
and~\eqref{eq:20} follows from the two bounds $(i/n)^{2-c}\le \alpha_{i,n}\le 1$.
 
\paragraph{Behaviour  of the dynamical four-arm event}

We will use that the probability of  the dynamical four-arm  event is stable if the time $t$ is small enough. For $n\geq1$, define 
      \begin{equation}
                t_n=\min\left (\frac1{2n^2\alpha_n},\frac14\right).
      \end{equation} By \cite[ Theorem 1.4, item (i)]{MR4574826}, we have for every $ n\ge k\ge  1$, 
      \begin{equation}
        \label{eq:21}
        \forall t\le2t_n\quad \mathbb P[\omega,\omega_t\in A_{k,n}(wbwb)]\asymp\alpha_{k,n}
      \end{equation}
      and
        \begin{equation}\label{eq:22}
                \forall t\le2t_n\quad \mathbb P[\omega,\omega_t\in A_{n}(wbwb)]\asymp\alpha_{n}.
      \end{equation}

      \subsection{Two-arm versus one-arm}

      In this section, we deduce  Theorem~\ref{thm:polyarm} from Lemma~\ref{lem:pivest} below, which will be proved in Section~\ref{sec:proof-lemma}. Recall that $\phi_n(t)$ is the probability that there exists a black arm at time $0$ and a white arm at time $t$.
      
\begin{lemma}\label{lem:pivest}
Let $n\geq k \ge 1$ be such that $10k\le n$. Writing $f=\mathrm 1_{A_n(b)}$ and $g=\mathrm 1_{A_n(w)}$, we have 
\begin{equation}\label{eq:maineq}
  \forall t\in[t_k,2t_k]\quad
\sum_{x\in \mathbb H(\Lambda_{k,3k})} -\E[\nabla_xf(\omega)\nabla_xg(\omega_t) ]\gtrsim k^2\cdot \alpha_k\cdot\phi_n(t).
\end{equation}
\end{lemma}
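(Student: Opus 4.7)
My plan is to prove the pointwise lower bound
\[
  -\mathbb{E}[\nabla_x f(\omega)\nabla_x g(\omega_t)]\;\gtrsim\;\alpha_k\,\phi_n(t)
\]
for every $x \in \mathbb{H}(\Lambda_{k,3k})$, and then sum over the $\asymp k^2$ hexagons in the annulus. Since $f = \mathbf 1_{A_n(b)}$ is increasing and $g = \mathbf 1_{A_n(w)}$ is decreasing, $\nabla_x f(\omega)\in\{0,1\}$ is the indicator that $x$ is pivotal for $A_n(b)$ in $\omega$ and $-\nabla_x g(\omega_t)\in\{0,1\}$ is the indicator that $x$ is pivotal for $A_n(w)$ in $\omega_t$; thus each summand equals the \emph{joint pivotality} probability. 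The bound should reflect a local-times-global decomposition: the factor $\alpha_k$ is the dynamical $4$-arm probability near $x$ at scale $\asymp k$ (stable thanks to $t\le 2t_k$), while $\phi_n(t)$ captures the macroscopic cost of linking the local $4$-arm to the hexagon at $0$ and to $\partial\Lambda_n$ in both configurations.

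Fix $x$, set $r = k/100$ and $B_x = x + \Lambda_r$. I will construct a sub-event $\mathcal G_x$ of the joint pivotality event as $\mathcal G_x^{\mathrm{in}} \cap \mathcal G_x^{\mathrm{out}}$, where $\mathcal G_x^{\mathrm{in}}$ is supported on $(\omega,\omega_t)|_{\mathbb H(B_x)}$ and $\mathcal G_x^{\mathrm{out}}$ on $(\omega,\omega_t)|_{\mathbb H(\Lambda_n\setminus B_x)}$. The inner event $\mathcal G_x^{\mathrm{in}}$ demands that $\omega,\omega_t\in A_r^x(wbwb)$ and that, in each configuration, the four arm endpoints land in four prescribed, well-separated arcs of $\partial B_x$ (two reserved for black, two for white). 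By~\eqref{eq:22} applied at the translated box around $x$ (valid since $t\le 2t_k\le 2t_r$), the dynamical analogue of Kesten's arm-separation lemma (which costs only a constant), and $\alpha_r\asymp\alpha_k$ (quasi-multiplicativity and RSW, since $k/r$ is bounded), one gets $\mathbb P[\mathcal G_x^{\mathrm{in}}]\gtrsim\alpha_k$. The outer event $\mathcal G_x^{\mathrm{out}}$ demands that in $\omega$ the two ``black'' arcs of $\partial B_x$ are connected in $\Lambda_n\setminus B_x$ to the hexagon at $0$ and to $\partial\Lambda_n$ by black paths, and analogously that in $\omega_t$ the ``white'' arcs are connected by white paths. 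By RSW one may route arms from fixed arcs of $\partial B_x$ to macroscopic regions at unit cost, so $\mathbb P[\mathcal G_x^{\mathrm{out}}]\gtrsim\phi_n(t)$. Since $\mathcal G_x^{\mathrm{in}}$ and $\mathcal G_x^{\mathrm{out}}$ depend on disjoint sets of hexagons they are independent, yielding $\mathbb P[\mathcal G_x]\gtrsim\alpha_k\phi_n(t)$. On $\mathcal G_x$, the two black $\omega$-half-arms at $x$ extended outside $B_x$ produce a black path from $0$ to $\partial\Lambda_n$ through $x$, while the two white $\omega$-half-arms at $x$, supplemented by a white separator in $\omega$ outside $B_x$ imposed as an extra clause of $\mathcal G_x^{\mathrm{out}}$ at an RSW cost absorbed in the constants, form by planar duality a dual white barrier blocking any black path when $x$ is flipped to white; hence $x$ is pivotal for $A_n(b)$ in $\omega$. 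A symmetric argument in $\omega_t$ gives pivotality for $A_n(w)$. Summing over $\mathbb H(\Lambda_{k,3k})$ concludes the proof.

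The main obstacle will be the outer gluing, and in particular the subtle point that $\omega$-pivotality requires a white separator inside $\omega$ itself (not merely inside $\omega_t$); one must therefore arrange, inside the outer event, a full dynamical two-arm structure in each configuration while keeping the probability $\gtrsim \phi_n(t)$. This calls for a careful arm-separation argument for the coupled pair $(\omega,\omega_t)$, blending RSW with the dynamical $4$-arm stability~\eqref{eq:22}; Proposition~\ref{prop:3} should be useful to control the correlations between the various non-monotone events that arise in this coupled setting.
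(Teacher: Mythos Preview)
Your plan differs substantially from the paper's proof, and the part you flag as ``the main obstacle'' is exactly where it breaks down. The difficulty is not a technicality: once you insist that the outer event $\mathcal G_x^{\mathrm{out}}$ contain a white separator in $\omega$ \emph{and} a black separator in $\omega_t$, you have placed, in each configuration, events of opposite monotonicity in overlapping regions (black arm in $\omega$ versus white separator in $\omega$; white arm in $\omega_t$ versus black separator in $\omega_t$). Proposition~\ref{prop:3} cannot be applied to such a mixture, and the claim that the separators come ``at an RSW cost absorbed in the constants'' is unjustified --- the white almost-circuit in $\omega$ must coexist with the black arm in $\omega$, and there is no positive-association inequality available for that pair. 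You recognise this yourself, but the proposal offers no mechanism to overcome it.

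The paper's proof is engineered precisely to avoid this trap. It does \emph{not} put the separators into the global arm event. Instead it introduces a second point $y\in B_k'$ and two interlaced annuli $A_\pm$ carrying a black circuit through $x$ and a white circuit through $y$ (the event $E_k(x,y)$ of Section~\ref{sec:two-interl-circ}). By requiring $\omega(y)=0$ and $\omega_t(y)=1$, the white circuit is complete in $\omega_x$ (giving the $\omega$-separator) while the black circuit is complete in $(\omega_t)^x$ (giving the $\omega_t$-separator). A second-moment argument over $y$ (Lemma~\ref{lem:4}) yields $\mathbb P[\mathcal E_{k,t}(x)]\gtrsim\alpha_k$. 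The global arm event $\mathcal A$ is then the $k$-separated dynamical black--white arm, which is \emph{purely} increasing in $\omega$ on one region and decreasing in $\omega_t$ on a disjoint region; its probability is $\gtrsim\phi_n(t)$ by arm separation. Because the monotonicities of $\mathcal A$ and $\mathcal E_{k,t}(x)$ are compatible on their overlap, Proposition~\ref{prop:3} gives $\mathbb P[\mathcal A\cap\mathcal E_{k,t}(x)]\gtrsim\alpha_k\phi_n(t)$, and on this intersection $x$ is pivotal in both configurations. The colour-flip at $y$ is the device that lets a single circuit serve as connector at one time and as blocker at the other --- this is the idea your direct decomposition is missing.
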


\begin{proof}[Proof of Theorem~\ref{thm:polyarm}]
  Let $n\geq 10$. Applying Proposition~\ref{prop:1} to $f=\mathrm 1_{A_n(b)}$ and $g=\mathrm 1_{A_n(w)}$, we get
  \begin{equation}
    \label{eq:23}
    \mathrm P[A_n(b)\cap A_n(w)]=   \mathrm P[A_n(b)] \mathrm P[A_n(w)]e^{I},
  \end{equation}
  where
  \begin{align}
      I&= \frac12\int_0^{1/2} \sum_{x\in\mathbb H(\Lambda_{n})}\frac{\E[\nabla_xf(\omega)\nabla_xg(\omega_t)]}{\phi_n(t)}dt.
  \end{align}
  Let $K=\{4^{i}:0\le i\le \log_4(n/10)\}$. Using first that $\E[\nabla_xf(\omega)\nabla_xg(\omega_t)]\le 0$ (because $f$ and $g$ are respectively increasing and decreasing), we can upper bound $I$ by restricting the sum to the subset of the hexagons $x\in \cup_{k\in K}\mathbb H(\Lambda_{k,3k})$ and the integral to $t\in [t_k,2t_k]$. We obtain
  \begin{align}
    I&\overset{\phantom{Lem.~\ref{lem:pivest}}}\leq\frac12 \sum_{k\in K} \ \sum_{x\in \mathbb H(\Lambda_{k,3k})}\ \int_{t_k}^{2t_k}{\frac{-\E[\nabla_xf(\omega)\nabla_xg(\omega_t)]}{\phi_n(t)} dt}\\
     &\overset{\text{Lem.~\ref{lem:pivest}}}\lesssim - \sum_{k\in K}t_k \cdot  k^2 \cdot\alpha_{k} \\
     &\overset{\phantom{\text{Lem.~\ref{lem:pivest}}}}\lesssim - \log n.
  \end{align}
  Plugging the estimate above in \eqref{eq:23} completes the proof.
\end{proof}

\subsection{Monochromatic versus polychromatic arms}
   In this section, we deduce  Theorem~\ref{thm:monoarm} from Lemma~\ref{lem:mainlemdis} below, which will be proved in Section~\ref{sec:proof-lemma-crefl}. The proof is very similar to the proof of Theorem~\ref{thm:polyarm}.

\begin{lemma}\label{lem:mainlemdis}
  Let $n\geq k \ge 1$ be such that $10k\le n$.  Writing  $F(\eta,\xi)$ for  the indicator that $(A_n(b),A_n(b))$ occurs disjointly in $(\eta,\xi)$, we have
\begin{equation}
  \forall t\in[t_k,t_{2k}]\quad
\sum_{x\in\mathbb H( \Lambda_{k,3k})} \E[D_xF(\omega,\omega_{1-t}) ]\gtrsim k^2\cdot \alpha_k\cdot \psi_n(1-t).
\end{equation}
\end{lemma}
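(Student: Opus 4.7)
The plan is to mirror the proof of Lemma~\ref{lem:pivest}, replacing the pivotal term $-\nabla_x f(\omega)\nabla_x g(\omega_t)$ by $D_x F(\omega,\omega_{1-t})$. The first step is to interpret the indicator $\{D_x F(\omega, \omega_{1-t}) = 1\}$ geometrically. Since $F$ is $\{0,1\}$-valued and increasing, this event asks that disjoint occurrence of $(A_n(b),A_n(b))$ in the pair hold with $(\omega(x), \omega_{1-t}(x))$ equal to $(1,0)$ or $(0,1)$ but fail when both are $0$. Extracting minimal witnesses, this forces $x$ to be a ``4-arm point'' at scale $k$ in both $\omega$ and $\omega_{1-t}$, together with a macroscopic pinch: the black arms not passing through $x$ in the two configurations must share at least one hexagon.

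For each $x\in\mathbb H(\Lambda_{k,3k})$, I would construct a good event $\mathcal G_x=\mathcal L_x\cap\mathcal M_x \subseteq \{D_x F(\omega,\omega_{1-t}) = 1\}$, where $\mathcal L_x$ is the local event that the standard four-arm configuration $A_{k/C}(wbwb)$ holds around $x$ in both $\omega$ and $\omega_{1-t}$, for some absolute constant $C$, and $\mathcal M_x$ is a macroscopic event on $\Lambda_n\setminus\Lambda(x,k/C)$ carrying the disjoint-arm structure of $(\omega,\omega_{1-t})$ at scale $n$ and routing the arms to the four-arm landing points on $\partial\Lambda(x,k/C)$ so that the pinch is realised. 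Constant-probability RSW gluing events connect the local and macroscopic pieces. Because $\mathcal L_x$ and $\mathcal M_x$ are measurable with respect to disjoint sets of hexagons, they are independent under $\mathrm P$. The substitution $\omega':=\overline{\omega_{1-t}}$ (which has the law of $\omega_t$ for $t$ in the stable range $[0,2t_k]$), combined with the stability estimate~\eqref{eq:22}, yields $\mathrm P[\mathcal L_x]\asymp\alpha_k$; a quasi-multiplicativity argument combined with Proposition~\ref{prop:3} (FKG for the noised pair) gives $\mathrm P[\mathcal M_x]\gtrsim\psi_n(1-t)$. Hence $\mathrm P[\mathcal G_x]\gtrsim\alpha_k\cdot\psi_n(1-t)$, and summing over the $\asymp k^2$ hexagons in $\mathbb H(\Lambda_{k,3k})$ delivers the lower bound.

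The main obstacle is the geometric step: engineering $\mathcal M_x$ so that $\mathcal L_x\cap\mathcal M_x$ actually forces $F(\omega_x,(\omega_{1-t})_x)=0$, i.e., no pair of disjoint black witnesses exists when $x$ is white in both configurations. This is subtler than in Lemma~\ref{lem:pivest}, where the black arm in $\omega$ and the white arm in $\omega_t$ carry opposite colours so that their pivotalities automatically conflict at $x$. Here both witness arms are monochromatic and live in different noised configurations, and the required intersection of the two ``not through $x$'' arms must be produced deliberately via the choice of the four landing points of $\mathcal L_x$ and the routing prescribed by $\mathcal M_x$.
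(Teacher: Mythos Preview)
Your outline correctly identifies the central difficulty, but the construction you sketch does not resolve it. A single four-arm event $\mathcal L_x$ at $x$, even with carefully routed macroscopic arms $\mathcal M_x$, cannot force the pinch $F(\omega_x,(\omega_{1-t})_x)=0$ while simultaneously keeping $F(\omega^x,(\omega_{1-t})_x)=F(\omega_x,(\omega_{1-t})^x)=1$. The latter two conditions require that $\omega_x\in A_n(b)$ and $(\omega_{1-t})_x\in A_n(b)$, so black arms exist in both configurations even with $x$ white; nothing in a local four-arm at $x$ then prevents those two arms from being routed \emph{around} $\Lambda(x,k/C)$ on disjoint hexagons. To force intersection you would need a topological barrier that funnels every black arm through a common hexagon once $x$ is closed, and a four-arm at $x$ alone does not create such a gate.

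The paper's proof supplies the missing idea: a \emph{second} point $y\in B_k'$ together with the interlaced-circuits event $E_k(x,y)$ of Section~\ref{sec:two-interl-circ}. After passing to $(\omega,\omega_t)$ via the colour swap $\overline F(\eta,\xi)=F(\eta,\overline\xi)$, the white circuit in $A_-$ (broken only at $x$ and $y$) forces every black arm in $\omega$ through $x$ or $y$; closing $x$ forces it through $y$. The black circuit in $A_+$ does the same for white arms in $\omega_t$. Hence in $(\omega_x,(\omega_t)^x)$ both the black arm and the white arm are forced through the single hexagon $y$, so $\overline F(\omega_x,(\omega_t)^x)=0$. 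The cost of this construction is a priori $\alpha_k^2$ (two pivotal points), and the key input is Lemma~\ref{lem:4}: a second-moment argument over $y\in B_k'$ showing that for fixed $x$ a suitable $y$ exists with probability $\gtrsim\alpha_k$, not $\alpha_k^2$. Your plan of factoring $\mathbb P[\mathcal L_x]\cdot\mathbb P[\mathcal M_x]$ via disjoint supports would not see this gain; the paper instead uses Proposition~\ref{prop:3} to decouple $\widetilde{\mathcal E}_{k,t}(x)$ from the separated disjoint-arm event $\mathcal A^d$, which do overlap spatially but have compatible monotonicity.
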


\begin{proof}[Proof of Theorem~\ref{thm:monoarm}]
Let $n\geq 10$. Applying Proposition~\ref{prop:2} to $A=B={A_n(b)}$  we get
  \begin{equation}
    \mathrm P[A_n(b)\circ A_n(b)]=   \mathrm P[A_n(b)\cap A_n(w)]e^{-J},\label{eq:24}
  \end{equation}
  where 
  \begin{align}
    J&=\frac12\int_0^{1} \sum_{x\in\mathbb H(\Lambda_n)}\frac{\E[D_xF(\omega,\omega_t)]}{\psi_n(t)}dt.
  \end{align}
  Let $K=\{4^{i} :  0\le i\le \log_4(n/10)\}$. Using that $\E[D_xF(\omega,\omega_t)]\ge 0$ (because $F$ is increasing), we can lower bound $J$ by restricting the sum to the subset of the hexagons $x\in \cup_{k\in K}\mathbb H(\Lambda_{k,3k})$ and the integral to $t\in [1-2t_k,1-t_k]$. We obtain
  \begin{align}
    J&\overset{\phantom{Lem.~\ref{lem:pivest}}}\geq\frac12 \sum_{k\in K} \ \sum_{x\in \mathbb H(\Lambda_{k,3k})}\ \int_{1-2t_k}^{1-t_k}{\frac{\E[D_xF(\omega,\omega_t)]}{\psi_n(t)} dt}\\
     &\overset{\text{Lem.~\ref{lem:mainlemdis}}}\gtrsim \sum_{k\in K} k^2\cdot \alpha_{k}\cdot  t_k \\
     &\overset{\phantom{Lem.~\ref{lem:pivest}}}\gtrsim \log n.
  \end{align}
  Plugging the estimate above in \eqref{eq:24} completes the proof.
\end{proof}

\subsection{Two interlaced circuits}
\label{sec:two-interl-circ}

Let $k \ge 1$ and $ t\in [t_k,2t_{k}]$.   Define the two annuli
\begin{equation}
    A_-=(-2k,-2k)+\Lambda_{3k,5k}\quad \text{and} \quad A_+=(2k,2k)+\Lambda_{3k,5k}.
\end{equation}
As illustrated on Figure~\ref{fig:1}, they overlap on two $2k\times 2k$ boxes, defined by
\begin{equation}
B_k=[-3k,-k]\times[k,3k]\quad\text{and}\quad B'_k=-B_k.
\end{equation}

  \begin{figure}[htbp]    \centering\hfill
    \begin{minipage}[c]{.3\linewidth}
    \includegraphics[width=\textwidth]{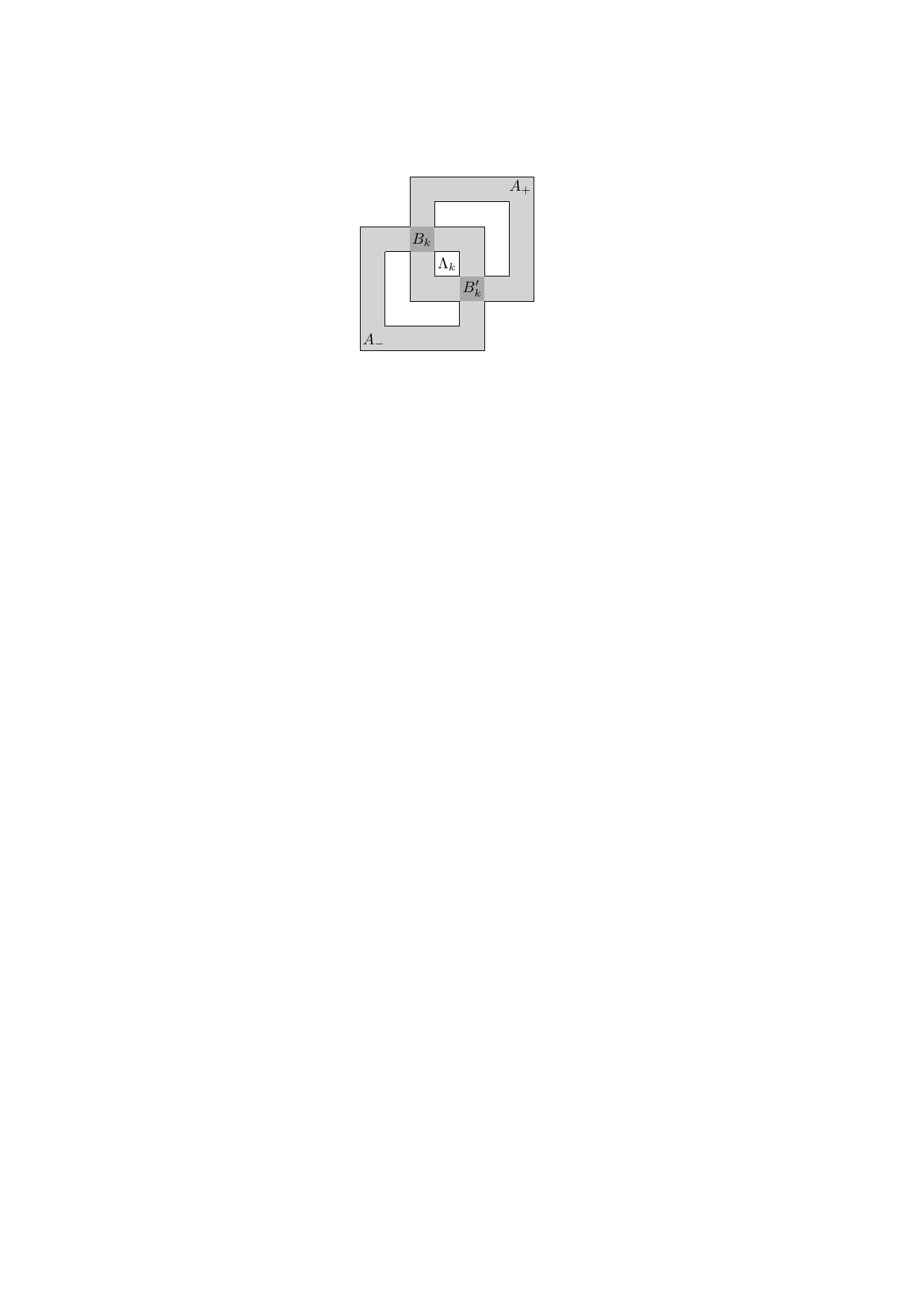}
  \end{minipage}\hfill
    \begin{minipage}[c]{.3\linewidth}
      \includegraphics[width=\textwidth]{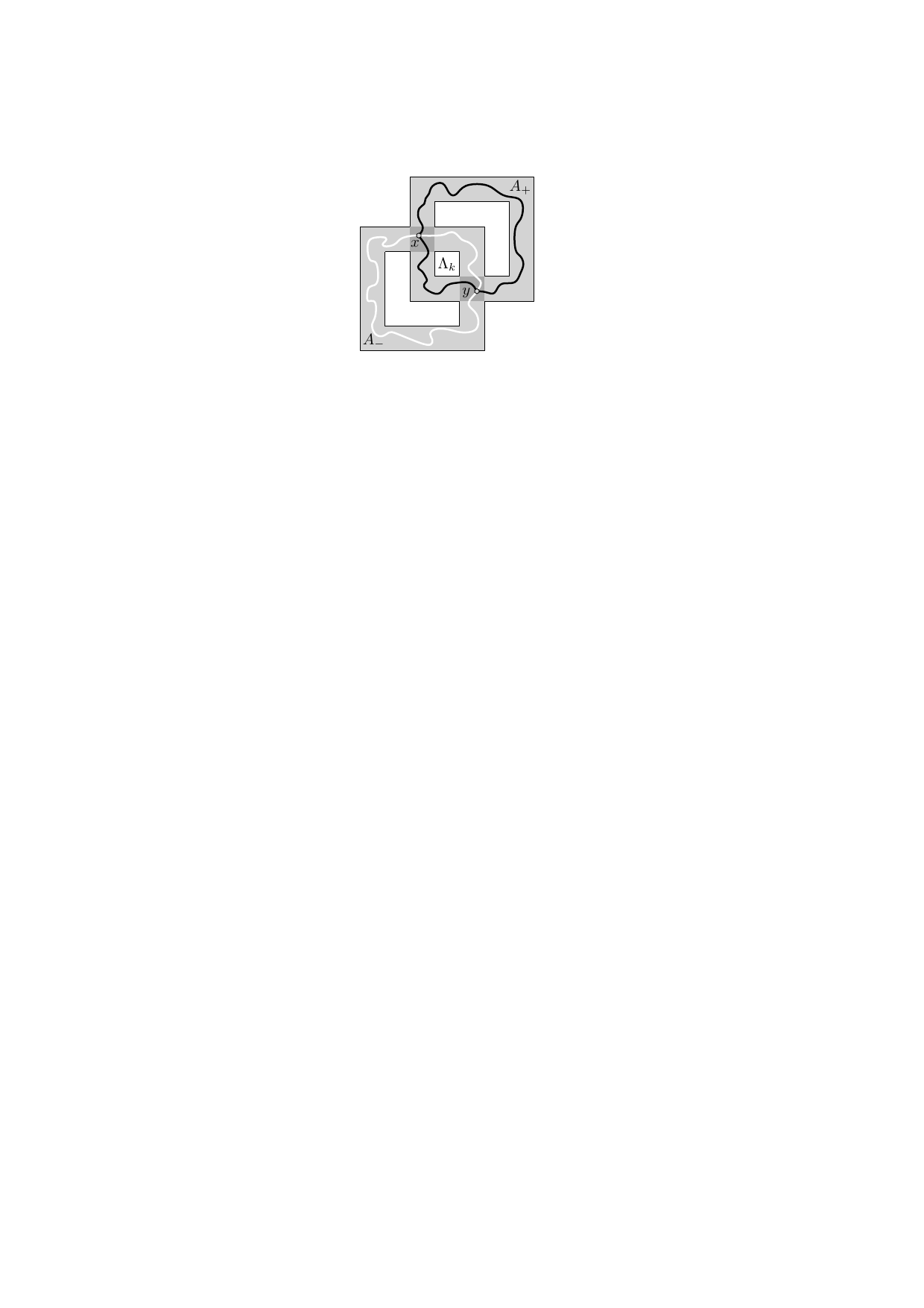}
  \end{minipage}\hfill{}
  
  \caption{Left: The two annuli $A_-$ and $A_+$ overlapping on the two boxes $B_k$ and $B_k'$. Right: Diagrammatic representation of the event $E_k(x,y)$.}
  \label{fig:1}
  \end{figure}

 Given a percolation  configuration $\eta$ and two hexagons $x\in B_k$ and $y\in B_k'$, we define $\eta^{xy}:=(\eta^x)^y$ (both $x$ and $y$ are set black) and $\eta_{xy}:=(\eta_x)_y$  (both $x$ and $y$ are set white).  For $x\in B_k$ and $y\in B_k'$, we define the static event $E_k(x,y)$  as the set of all percolation configurations $\eta$ such that
\begin{itemize}
\item in the configuration $\eta^{xy}$, there is a black circuit in $A_+$ surrounding $\Lambda_k$,
  \item in the configuration $\eta_{xy}$, there is a white circuit in $A_-$ surrounding $\Lambda_k$.
  \end{itemize} 

The event $E_k(x,y)$ is defined in terms of $\eta^{xy}$, $\eta_{xy}$, so it does not depend on the colours of $x$ and $y$.
  
  \begin{lemma}
  \label{lem:3}
  Let $k\ge 1$ and $t\in[t_k,2t_k]$. For every $x\in B_k$ and every $y\in B_k'$, we have
  \begin{equation}
        \mathbb P[\omega,\omega_t \in  E_{k}(x,y)]\gtrsim \alpha_k^2.
  \end{equation}
\end{lemma}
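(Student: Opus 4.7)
The plan is to exhibit a sub-event of $\{\omega,\omega_t\in E_k(x,y)\}$ of probability $\gtrsim \alpha_k^2$, assembled from a dynamical four-arm event at $x$, an analogous one at $y$, and a finite collection of dynamical crossings of rectangles of bounded aspect ratio. Fix a small $\epsilon>0$ independent of $k$ and set $r=\lfloor\epsilon k\rfloor$, with $\epsilon$ small enough that the boxes $x+\Lambda_r$ and $y+\Lambda_r$ are disjoint for every $x\in B_k$, $y\in B_k'$.

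Let $\mathcal F_x$ be the dynamical event that in both $\omega$ and $\omega_t$ there exist four disjoint arms from $x$ to $\partial(x+\Lambda_r)$ with the specific pattern: two black arms exiting through the top and bottom of $x+\Lambda_r$, and two white arms through the left and right. By the dynamical four-arm stability estimate~\eqref{eq:22} applied at scale $r$ (valid since $t\le 2t_k\le 2t_r$), quasi-multiplicativity of the four-arm probability, and the RSW-based equivalence between the generic and the direction-specified four-arm events, $\mathbb{P}[\mathcal F_x]\asymp\alpha_r\asymp\alpha_k$. Define $\mathcal F_y$ analogously at $y$, but with two black arms exiting horizontally and two white arms vertically; this orientation is forced by the geometry of $B_k'$, since the black circuit in $A_+$ traverses $B_k'$ horizontally while the white circuit in $A_-$ traverses $B_k'$ vertically. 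As $\mathcal F_x$ and $\mathcal F_y$ have disjoint supports they are independent, so $\mathbb{P}[\mathcal F_x\cap\mathcal F_y]\asymp\alpha_k^2$.

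Next I add a finite collection of dynamical crossings on pairwise disjoint rectangles of bounded aspect ratio, all disjoint from $x+\Lambda_r$ and $y+\Lambda_r$: (a) two dynamical black crossings inside $A_+\setminus A_-$, each joining one black arm of $\mathcal F_x$ to one black arm of $\mathcal F_y$ along one of the two disjoint routes around $\Lambda_k$ inside $A_+$, together with dynamical black vertical extensions in thin rectangles above and below $x$ inside $B_k\setminus(x+\Lambda_r)$ and dynamical black horizontal extensions left and right of $y$ inside $B_k'\setminus(y+\Lambda_r)$; (b) the mirror-image dynamical white crossings inside $A_-\setminus A_+$, together with white horizontal extensions inside $B_k\setminus(x+\Lambda_r)$ and white vertical extensions inside $B_k'\setminus(y+\Lambda_r)$. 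The key geometric point is that within $B_k\setminus(x+\Lambda_r)$ the black-extension rectangles form a thin vertical strip through $x$ while the white-extension rectangles form a thin horizontal strip through $x$, hence are disjoint; the symmetric statement holds inside $B_k'$. Each such rectangle crossing has probability $\gtrsim 1$ uniformly for $t\le 2t_k$ by dynamical RSW. On the intersection of $\mathcal F_x\cap\mathcal F_y$ with all these crossings, forcing $x,y$ to black concatenates the vertical arms at $x$, the horizontal arms at $y$, the black extensions and the $A_+$-side crossings into a full black circuit around $\Lambda_k$ inside $A_+$, simultaneously for $\omega$ and $\omega_t$; symmetrically, forcing $x,y$ to white yields the white circuit inside $A_-$. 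Thus this intersection is contained in $\{\omega,\omega_t\in E_k(x,y)\}$.

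To lower bound the probability of the intersection by $\alpha_k^2$, note that the supports of $\mathcal F_x$, $\mathcal F_y$, the black-crossing family and the white-crossing family are pairwise disjoint, so the four families are jointly independent. Within each monotone family the standard Harris--FKG inequality, applied to the product order on the dynamical pair $(\omega,\omega_t)$, bounds the intersection below by the product of the individual constant-probability rectangle crossings. Altogether $\mathbb{P}[\omega,\omega_t\in E_k(x,y)]\gtrsim \alpha_k\cdot\alpha_k\cdot 1=\alpha_k^2$. The main obstacle is the geometric bookkeeping required to guarantee the disjointness and the correct connectivity of the rectangles; once that is in place, the estimate follows directly from independence, monotone FKG, dynamical RSW, and the stability estimate~\eqref{eq:22}.
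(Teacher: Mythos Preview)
Your approach is essentially the paper's: build a sub-event from a four-arm configuration at $x$, one at $y$, and RSW crossings linking them, then bound below via correlation inequalities. The paper packages things slightly differently --- it bundles the four-arm at $x$ together with the long black/white crossings into a single event $\mathsf{Piv}_x$ (pivotality of $x$ for a black top-to-bottom crossing of $R_\ell$ and a white left-to-right crossing of $S_t$), does the same at $y$, and glues with a constant-probability event $E$ via Proposition~\ref{prop:3} --- but the architecture is the same.

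There is, however, a genuine gap in your gluing step. You claim that the supports of $\mathcal F_x$, $\mathcal F_y$, the black-crossing family and the white-crossing family are pairwise disjoint, and then that on the intersection the arms of $\mathcal F_x$ ``concatenate'' with the extension crossings. These two claims are in tension: for a black arm ending on $\partial(x+\Lambda_r)$ to connect to a black crossing of a rectangle in $B_k\setminus(x+\Lambda_r)$, the crossing rectangle must share boundary hexagons with $x+\Lambda_r$ (otherwise the two black pieces need not meet), and then the supports are not disjoint and independence fails. If instead you keep the extension strictly outside $x+\Lambda_r$, concatenation is not guaranteed. Since $\mathcal F_x$ is a four-arm event and hence non-monotone, you cannot repair this with plain Harris--FKG on the whole configuration either.

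The fix is exactly what the paper does: absorb the extensions into the four-arm piece, i.e.\ replace $\mathcal F_x$ by an event of the form ``setting $x$ black yields a long black crossing, setting $x$ white yields a long white crossing'' whose non-monotone part lives in a private region around $x$, and then invoke Proposition~\ref{prop:3}, which allows each factor a private set $A$ (resp.\ $B$) on which no monotonicity is required, together with shared increasing and decreasing regions $S,T$. With that modification your argument goes through and coincides with the paper's.
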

\begin{proof}
  Fix $k\ge 1$, $t\in[t_k,2t_k]$,  $x\in B_k$ and $y\in B_k'$.  
Write $A_+$ as the union of 4 rectangles $R_\ell,R_t,R_r$ and $R_b$ that overlap on 4 squares (see Figure~\ref{fig:2}). Similarly, write  $A_-$ as the union of 4 rectangles $S_\ell,S_t,S_r$ and $S_b$.
\begin{figure}[htbp]
  \centering
    \begin{minipage}[c]{.3\linewidth}
    \includegraphics[width=\textwidth]{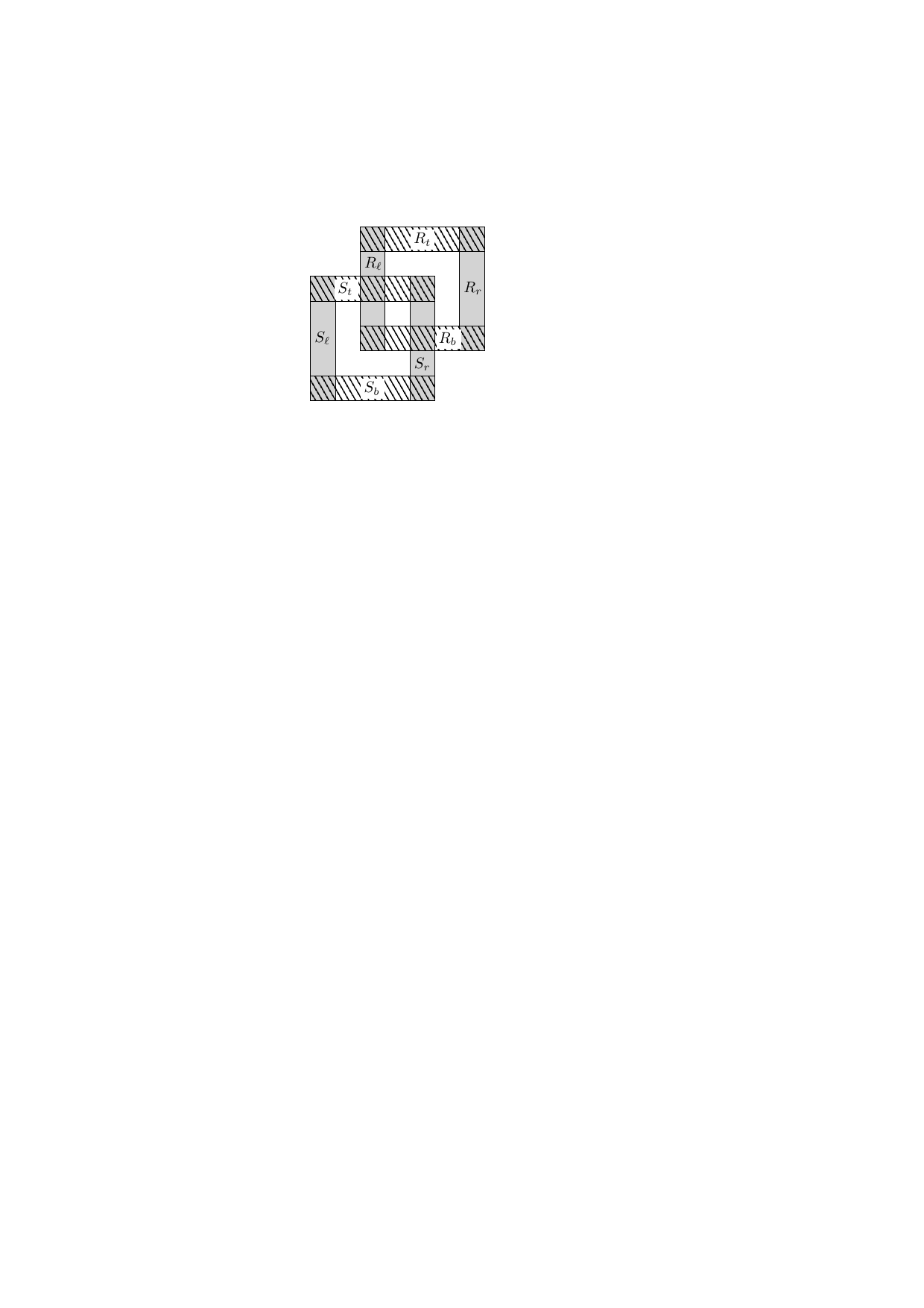}
  \end{minipage}\hfill
  \caption{Decomposition of $A_-$ and $A_+$ into rectangles}
\label{fig:2}
\end{figure}  Let $\mathsf{Piv}_x$ be the set of percolation configurations $\eta$ such that
  \begin{itemize}
  \item in $\eta_x$ there is a white path crossing $S_t$ from left to right, and
  \item in $\eta^x$, there exists a black path crossing $R_\ell$ from top  to bottom.
  \end{itemize}
  As observed in \cite[Prop. 5.9]{MR2736153}, the dynamical arm events satisfy the arm separation phenomena.  Using  this arm separation together with Russo-Seymour-Welsh estimates and Proposition~\ref{prop:3}, we obtain 
  \begin{equation}
    \label{eq:25}
    \mathbb P[\omega,\omega_t\in \mathsf{Piv}_x]\ge \mathbb P[\omega,\omega_t\in A_k({wbwb})]\overset{\eqref{eq:22}}\gtrsim \alpha_k.
  \end{equation}
  We skip the precise details in establishing the equation above and refer to the appendix where similar arguments are detailed, see Lemma~\ref{lem:5}.
  Similarly, defining  $\mathsf{Piv}_y$ as the set of percolation configurations $\eta$ such that
  \begin{itemize}
  \item in $\eta_y$ there is a white path crossing $S_r$ from top to bottom, and 
  \item in $\eta^y$, there exists a black path crossing $R_b$ from top to bottom,
  \end{itemize}
  we have 
  \begin{equation}
    \label{eq:26}
    \mathbb P[\omega,\omega_t\in \mathsf{Piv}_y]\gtrsim \alpha_k.
  \end{equation}
  Let $E$ be the static event that 
  \begin{itemize}
  \item $R_t$ and $R_r$ are both crossed by a black path in the long direction, and
  \item  $S_\ell$ and $S_b$ are both crossed by a white path in the long direction.
  \end{itemize}
  By Russo-Seymour-Welsh estimates (see~\eqref{eq:19}) and Harris-FKG inequality for percolation, we have
  \begin{equation}
        \mathrm P[E] \gtrsim 1,
  \end{equation}
  and, therefore, by monotonicity in $t$ (see Remark~\ref{rem:4}), we have
  \begin{equation}
    \label{eq:27}
    \mathbb P[\omega,\omega_t\in E]\ge \mathrm P[E]^2 \gtrsim 1.
  \end{equation}
  Finally, using Proposition~\ref{prop:3}, we deduce from \eqref{eq:25}, \eqref{eq:26} and \eqref{eq:27} that
  \begin{equation}
        \mathbb P[\omega,\omega_t\in E\cap \mathsf{Piv}_x\cap\mathsf{Piv}_y]\gtrsim \alpha_k^2.
  \end{equation}
  
\end{proof}
Let $k\ge 1$, $x\in B_k$ and $y\in B_k'$. For a colour $\sigma\in\{b,w\}$, we define the static event
  \begin{equation}
        E_k^\sigma(x,y)=\{\eta\in E_k(x,y),\eta(y)=\sigma\},
  \end{equation}
  and, for $t\in[0,1]$, the dynamical event
\begin{equation}
    \mathcal E_{k,t}(x)=\bigcup_{y\in B_k'}\{\omega\in E_k^w(x,y), \omega_t\in E_k^b(x,y)\}.
\end{equation}
Notice that the event $\mathcal E_{k,t}(x)$ implies that the hexagon $x$ is pivotal for the existence  of a white  circuit in $A_-$ at time $0$, and is pivotal for the existence of a black circuit in $A_+$ at time $t$. In the event estimated in Lemma~\ref{lem:3}, we asked (up to Russo-Seymour-Welsh constructions) that both $x$ and $y$ were pivotal, and therefore the probability was the square of the pivotality probability.  In the event  $\mathcal E_{k,t}(x)$, we require that $x$ is pivotal, but we only ask for the \emph{existence} of a pivotal in the box $B_k'$. As a consequence,  we need to pay the pivotality  probability only once (the existence of a  pivotal has a constant probability), as stated in the lemma below.

\begin{lemma}
  \label{lem:4}
  For all $k\ge 1$ and $t\in [t_k,2t_k]$, we have
  \begin{equation}
        \forall x\in B_k\quad \mathbb P[\mathcal E_{k,t}(x)]\gtrsim \alpha_k.
  \end{equation}
\end{lemma}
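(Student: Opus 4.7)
The plan is to construct, for $x \in B_k$ and $t \in [t_k, 2t_k]$, an event of probability $\gtrsim \alpha_k$ that is contained in $\mathcal{E}_{k,t}(x)$, following the architecture of the proof of Lemma~\ref{lem:3} but replacing the pivotality at a specific $y \in B_k'$ (which cost $\alpha_k$) by the \emph{existence} of such a pivotal, which will have constant probability.

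First, I would reuse the event $\mathsf{Piv}_x$ from the proof of Lemma~\ref{lem:3} and intersect it with an RSW event $\mathcal{R}$ providing, for both $\omega$ and $\omega_t$, white long-direction crossings of $S_\ell$, $S_b$ and of the two rectangles comprising $S_r \setminus B_k'$, as well as black long-direction crossings of $R_t$, $R_r$ and of the two rectangles comprising $R_b \setminus B_k'$. By~\eqref{eq:22}, Russo--Seymour--Welsh, and Proposition~\ref{prop:3}, the intersection $\mathsf{Piv}_x \cap \mathcal{R}$ has probability $\gtrsim \alpha_k$ and is measurable with respect to $(\omega, \omega_t)$ restricted to $\mathbb{H}((A_- \cup A_+) \setminus B_k')$.

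Next, I would define the local event $\mathcal{L}$ in $B_k'$: there exists $y \in B_k'$ with $\omega(y) = w$, $\omega_t(y) = b$, and $y$ is pivotal for the black left-right crossing of $B_k'$ in both configurations $\omega$ and $\omega_t$. The key claim is $\mathbb{P}[\mathcal{L}] \gtrsim 1$. The pivotal-counting identity gives $\asymp k^2 \alpha_k$ expected pivotals for the LR crossing of $B_k'$, and since $t \asymp t_k \asymp 1/(k^2 \alpha_k)$, the expected number of flipped pivotals that remain pivotal at time $t$ is of constant order; a matching second moment bound using the dynamical four-arm stability~\eqref{eq:22} at scale $k$ then yields the claim. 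Since $\mathsf{Piv}_x \cap \mathcal{R}$ and $\mathcal{L}$ depend on disjoint sets of hexagons, they are independent, and I conclude $\mathbb{P}[\mathsf{Piv}_x \cap \mathcal{R} \cap \mathcal{L}] \gtrsim \alpha_k$. On this joint event, the witness $y$ from $\mathcal{L}$ verifies $\omega \in E_k^w(x, y)$ and $\omega_t \in E_k^b(x, y)$: the four crossings required for the white circuit in $A_-$ (in $\omega_{xy}$) come from $\mathsf{Piv}_x$ (for $S_t$), from $\mathcal{R}$ (for $S_\ell$, $S_b$, and the two components of $S_r \setminus B_k'$), and from the pivotality at $y$ combined with $\omega(y) = w$ (for the $B_k'$ part of $S_r$); the black circuit in $A_+$ for $\omega^{xy}$ is built analogously, and the time-$t$ conditions follow identically. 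Standard RSW gluing in overlap squares between adjacent pieces ensures the crossings concatenate.

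The main obstacle is the lower bound $\mathbb{P}[\mathcal{L}] \gtrsim 1$ in the second step. The pivotal-count first moment is straightforward, but the second moment bound --- equivalently, the fact that a typical flipped pivotal in $\omega$ with $\omega(y) = w$ remains a pivotal in $\omega_t$ --- requires the dynamical four-arm stability~\eqref{eq:22} applied locally around each candidate $y$, together with a careful use of quasi-multiplicativity.
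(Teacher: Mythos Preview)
Your approach is correct and rests on the same second-moment computation as the paper, but you package it differently. The paper applies the second-moment method directly to the full count $Y=|\{y\in B_k':\omega\in E_k^w(x,y),\ \omega_t\in E_k^b(x,y)\}|$, using Lemma~\ref{lem:3} for the first moment to obtain $\mathbb E[Y]\gtrsim k^2\cdot t_k\alpha_k^2=\alpha_k$ and then bounding $\mathbb E[Y^2]\lesssim\alpha_k$ via the dynamical four-arm estimates and quasi-multiplicativity; Cauchy--Schwarz gives $\mathbb P[Y\ge1]\gtrsim\alpha_k$ directly. You instead factor the target event as $(\mathsf{Piv}_x\cap\mathcal R)\cap\mathcal L$, pay the $\alpha_k$ once via independence, and run the second-moment argument only on the local event $\mathcal L$ in $B_k'$ to get $\mathbb P[\mathcal L]\gtrsim1$. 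The underlying moment bounds are identical; what you gain is a cleaner conceptual split between ``pivotality at $x$'' and ``existence of a flipping pivotal in $B_k'$'', at the cost of having to set up the independence carefully.

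One point deserves more care than your final sentence suggests. For $\mathcal R$ and $\mathcal L$ to be independent you need them supported on disjoint sets of hexagons, but for the crossings to concatenate into circuits you need overlap. Concretely, the white top--bottom crossing of $B_k'$ through $y$ lands \emph{somewhere} on the top edge of $B_k'$, and the white crossing of the upper component of $S_r\setminus B_k'$ lands \emph{somewhere} on the same edge; these need not meet. The standard fix is to shrink the support of $\mathcal L$ to a concentric sub-box $\tilde B\subsetneq B_k'$ (requiring the four arms from $y$ only to $\partial\tilde B$) and to include in $\mathcal R$ RSW crossings inside the collar $B_k'\setminus\tilde B$ that catch those arms wherever they land. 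This preserves both the independence and the second-moment bound for $\mathcal L$. The paper's route sidesteps this entirely by never separating the event into independent pieces: Lemma~\ref{lem:3} already contains all the gluing, and the second moment on the full $\mathcal E(y)$ inherits it.
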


\begin{proof}
  Fix $k\ge 1$, $t\in[t_k,2t_k]$, and $x\in B_k$. For $y \in B_k'$, let
  \begin{equation}
        \mathcal E(y)=\{\omega\in E_k^w(x,y), \omega_t\in E_k^b(x,y)\}
  \end{equation}
  be the event appearing in the definition of   $\mathcal E_{k,t}(x)$.  Writing 
  \begin{equation}
        Y=\{y\in B_k'\: :\: \mathcal E(y)\text{ occurs}\},
  \end{equation}
  our goal is to prove that $\mathbb P[Y\ge 1]\gtrsim \alpha_k$, which we achieve by a second-moment argument. Let us start by estimating the first moment.  By definition, we have 
  \begin{equation}
        \mathcal E(y)=\{\omega,\omega_t\in E_k(x,y)\} \cap\{\omega(y)=0,\omega_t(y)=1\}.
  \end{equation}
  Since the  event  $E_k(x,y)$ does not depend on the colour of  $y$, the two events above are independent. Hence, applying Lemma~\ref{lem:3}, and using $\mathbb P[\omega(y)=0,\omega_t(y)=1]=\frac  t 2\ge \frac {t_k} 2$, we find
  \begin{equation}
       \mathbb P[  \mathcal E(y)]\gtrsim t_k\alpha_k^2.
  \end{equation}
  Since $k^2\alpha_kt_k=1$, we can estimate the first moment of $Y$ as
  \begin{equation}
  \mathbb E[Y]\gtrsim \alpha_k
\end{equation}
We now estimate the second moment
\begin{equation}
    \mathbb E[Y^2]=\sum_{y,z\in B_k'}\mathbb P[\mathcal E(y)\cap \mathcal E(z)].
\end{equation}
Let $i\in \{1,\ldots,k\}$. Let $y,z\in B_k'$ such that the $L^\infty$ distance between $y$ and $z$ satisfies $i\le \|y-z\|_\infty<i+1$. If the event $\mathcal E(y)\cap \mathcal E(z)$ occurs, then
\begin{itemize}
\item $\omega(y)=\omega(z)=0$  and $\omega_t(y)=\omega_t(z)=1$, and 
\item there is a dynamical four-arm in $\Lambda_{1,k}(x)$,  $\Lambda_{1,i/3}(y)$,  $\Lambda_{1,i/3}(z)$, and $\Lambda_{2i,k}(z)$.
  \end{itemize}
Using  independence between the events above and~\eqref{eq:21}, we obtain
\begin{equation}
 \mathbb P[ \mathcal E(y)\cap \mathcal E(z)] \le   \alpha_k \: \alpha_{i/3} ^2\: \alpha_{2i,k}\:  t^2.
\end{equation}
By quasi-multiplicativity of four-arm probability and summing over the possible distances between pairs of points in $B_k'$, we obtain
\begin{equation}
     \mathbb E[Y^2]\lesssim k^2 \alpha_k^2 \sum_{i=1}^k i\alpha_i \overset{\eqref{eq:20}}\lesssim k^4\alpha_k^3t_k^2 \lesssim\alpha_k. 
\end{equation}
Therefore, by the second moment method, we conclude
\begin{equation}
    \mathbb P[Y\ge1]\ge \frac{\mathbb E[Y]^2}{\mathbb E[Y^2]}\gtrsim \alpha_k.
\end{equation}
\end{proof}

\subsection{Proof of Lemma~\ref{lem:pivest}}
\label{sec:proof-lemma}

Fix  $n\ge k\ge  1$ such that $10k\le n$ and $t\in [t_k,2t_k$]. Let $f=\mathrm 1_{A_n(b)}$ and $g=\mathrm 1_{A_n(w)}$.
Using the notation of the previous section,  our goal is to show that
\begin{equation}\label{eq:28}
    \forall x\in B_k\quad -\mathbb E[\nabla_xf(\omega)\nabla_xg(\omega_t)]\gtrsim \phi_n(t)\alpha_k.
\end{equation}
This will conclude the proof, since $B_k\subset \Lambda_{k,3k}$ and $|B_k|\gtrsim k^2$. In order to prove the estimate above, we ``glue'' the event $\mathcal E_{k,t}(x)$ together with a well-separated version of the dynamical black-white arm event that we now define.

\paragraph{Static $k$-separated one-arm events}
Consider the rectangle  $R=[-k,k]\times[-3k,k]$ Consider the ``extended annulus'' $S=\Lambda_{7k,n}\cup ((0,6k)+\Lambda_k)$. Define the static events
\begin{align}
    A_{n}^{k\text{-sep}}(b)=\left\{
  \begin{array}[c]{c}
    \exists \text{ black path in $R$  from the hexagon centred at $0$ to the }\\
    \text{bottom of $R$ and a black path in $S$ from $\partial\Lambda_{6k}$ to   $\partial\Lambda_n$}
  \end{array}
  \right \}. 
\end{align}
and
\begin{equation}
    A_n^{k\text{-sep}}(w)=\left\{
  \begin{array}[c]{c}
    \exists \text{ white path in $-R$  from the hexagon centred at $0$ to the}\\
    \text{top of $-R$ and a white path in $-S$ from $\partial\Lambda_{6k}$ to   $\partial\Lambda_n$}
  \end{array}
  \right \} .
\end{equation}
In words, a ``$k$-separated'' arm is a path from the origin to $\partial \Lambda_n$ that is ``cut'' between  scale $3k$ and $6k$, where each of the resulting paths are required to satisfy some geometric constraints: in particular, they must  end on some specific parts of the boundary of $\Lambda_{3k}$ and $\Lambda_{6k}$ (see Figure~\ref{fig:3}).

  \begin{figure}[htbp]
  \centering 
 \hfill
    \begin{minipage}[c]{.4\linewidth}
    \includegraphics[width=\textwidth]{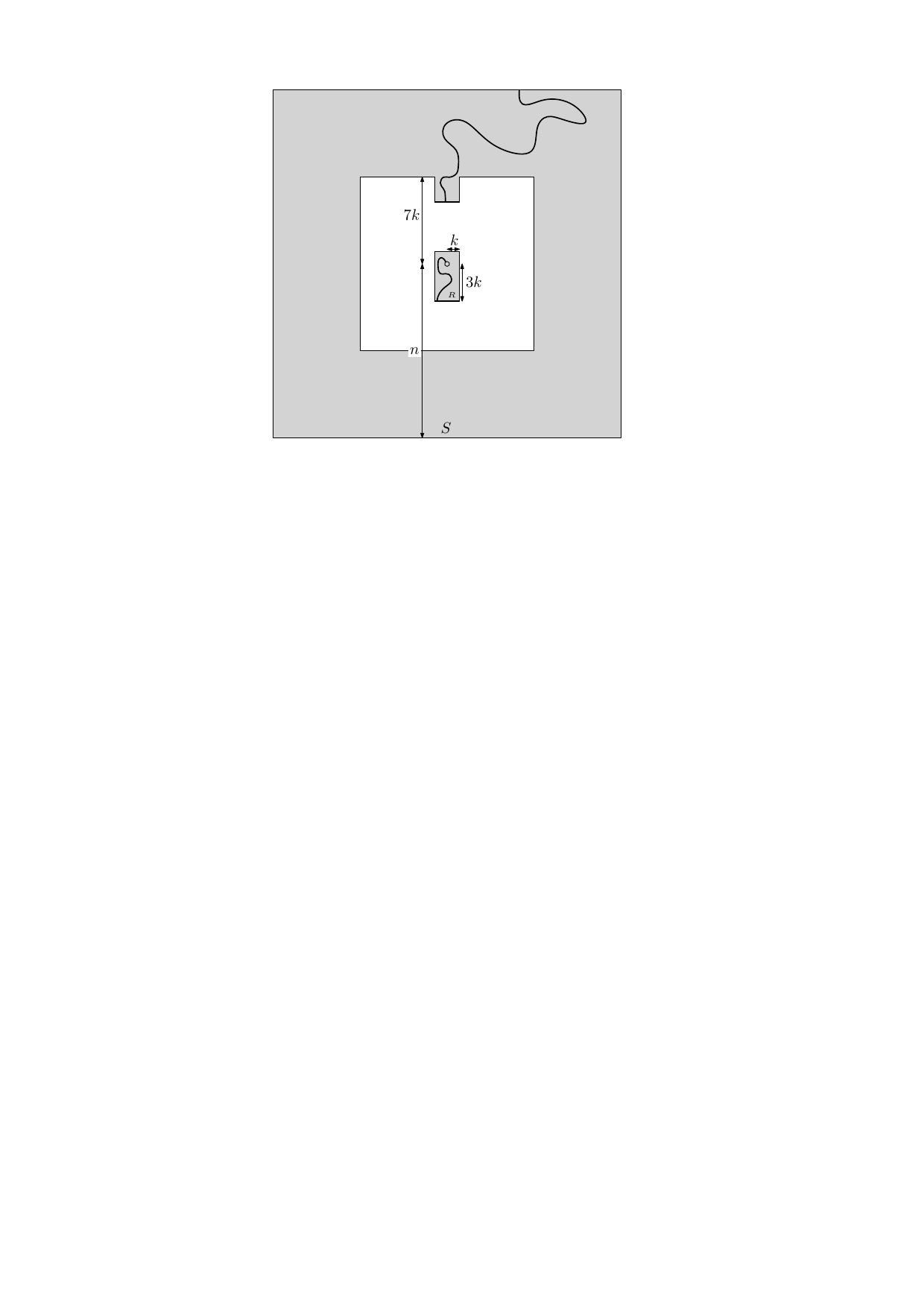}
  \end{minipage}\hfill
    \begin{minipage}[c]{.4\linewidth}
      \includegraphics[width=\textwidth]{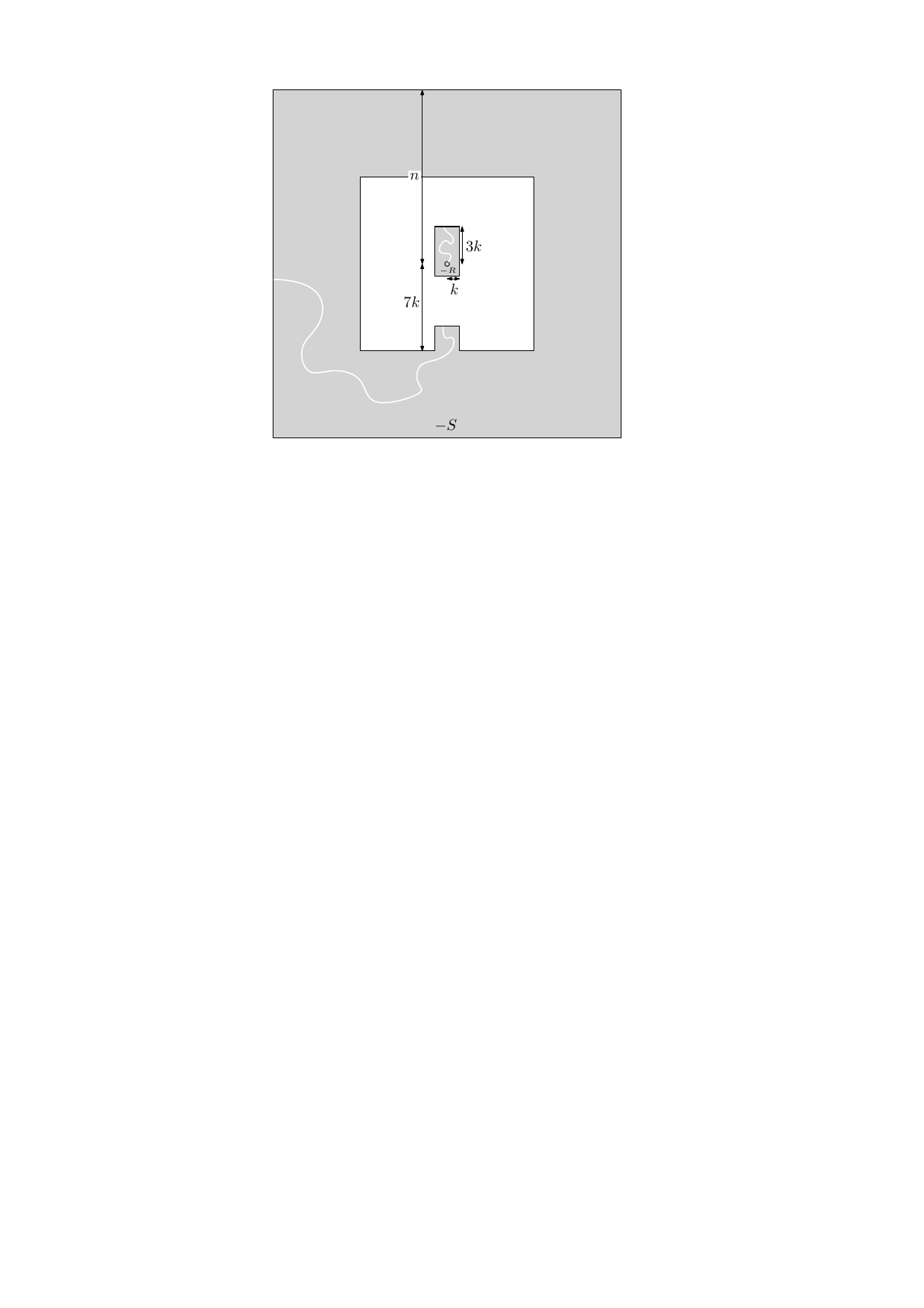}
  \end{minipage}\hfill{}
  \caption{The $k$-separated one-arm events: black (left) and white (right).}
  \label{fig:3}
\end{figure}

\paragraph{\bf{Dynamical $k$-separated black-white arm.}} Consider the dynamical event
\begin{equation}
   \mathcal A=\{\omega\in A_n^{k\text{-sep}}(b), \omega_t\in A_n^{k\text{-sep}}(w)\}.
\end{equation}
By arm separation (see Appendix \ref{sec:separation-arms}), we have
\begin{equation}
  \label{eq:29}
  \mathbb P[\mathcal A]\gtrsim \phi_n(t).
\end{equation}

  In order to prove \eqref{eq:28}, we fix $x\in B_k$. We ``glue'' the dynamical black-white arms using the two interlaced circuits of the previous section. By applying Proposition~\ref{prop:3}, we have
  \begin{equation}
    \label{eq:30}
    \mathbb P[\mathcal A\cap \mathcal E_{k,t}(x)]\ge \mathbb P[\mathcal A]\mathbb P[\mathcal E_{k,t}(x)]\gtrsim \phi_n(t)\alpha_k.
  \end{equation}
  Furthermore, by definition of the two events above, we have
  \begin{equation}
        \mathcal A\cap \mathcal E_{k,t}(x)=\bigcup_{y\in B_k'}\{\omega\in E_k^w({x,y})\cap A_n^{k\text{-sep}}(b),\omega_t\in E_k^b({x,y})\cap A_n^{k\text{-sep}}(w)\},
  \end{equation}
  The main observation is that  the static events describing the behaviour of $\omega$  (resp $\omega_t$) in the equation above imply that $x$ is pivotal for  the black (resp. white) one-arm event: as illustrated on Figure~\ref{fig:4}, for every $y\in B_k'$ and  every percolation configuration $\eta$, we have 
  \begin{equation}
        \eta\in E_{k}^w(x,y)\cap A_n^{k\text{-sep}}
        (b)\implies \nabla_xf(\eta)=1.
      \end{equation}
                    \begin{figure}[htbp]
                \centering
                \includegraphics[width=.5\textwidth]{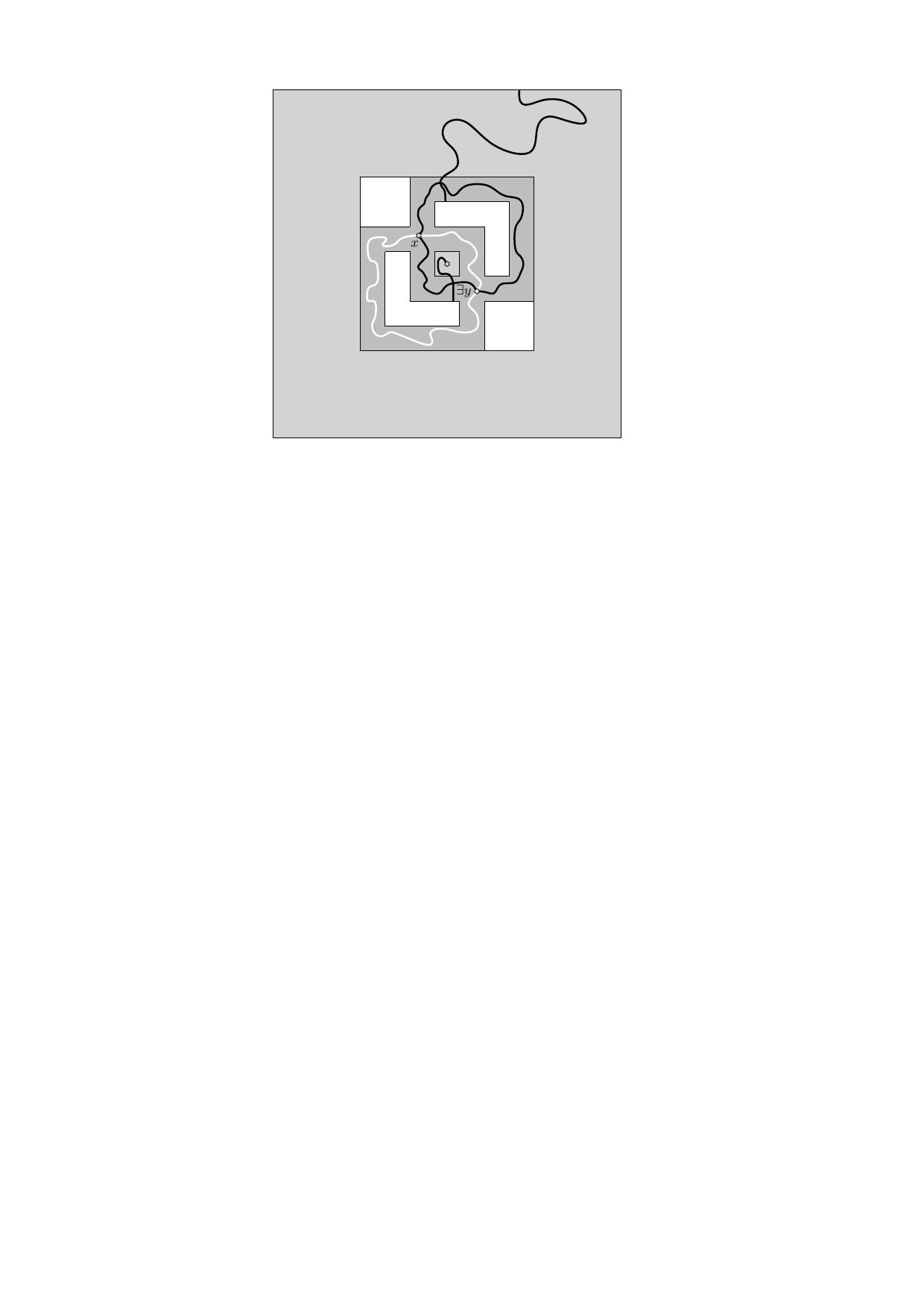}
          \caption{Diagrammatic representation of $E_{k}^w(x,y)\cap A_n^{k\text{-sep}}$ which implies that $x$ is pivotal for the black one-arm event.}
          \label{fig:4}
        \end{figure}
        Equivalently, if we consider the case when $y$ is coloured black instead of white, we have 
    \begin{equation}
        \eta\in E_k^b({x,y})\cap A_n^{k\text{-sep}}(w)\implies \nabla_xg(\eta)=-1. 
      \end{equation}
      The two displayed implications above show that 
  \begin{align}
    \mathbb P[\mathcal A\cap \mathcal E_{k,t}(x)]&\le \mathbb P[\nabla_xf(\omega)=1,\nabla_x g(\omega_t)=-1]\\
                                          &= - \mathbb E[\nabla_xf(\omega)\nabla_x g(\omega_t)],
  \end{align}
  which, together with~\eqref{eq:30},  concludes the proof of Equation~\eqref{eq:28}.

\subsection{Proof of Lemma~\ref{lem:mainlemdis}}
\label{sec:proof-lemma-crefl}

Fix  $n\ge k\ge  1$ such that $10k\le n$ and $t\in [t_k,2t_k$]. For all percolation configurations $\eta,\xi$, let $F(\eta,\xi)$ be the indicator that $(A_n(b),A_n(b))$ occurs disjointly in $(\eta,\xi)$.  Our goal is to show that
\begin{equation}\label{eq:33}
    \forall x\in B_k\quad \mathbb E[D_x F(\omega,\omega_{1-t})]\gtrsim \mathbb E[F(\omega,\omega_{1-t})]\cdot \alpha_k.
\end{equation}

 We begin with some elementary considerations in order to rewrite the quantities above in terms of $(\omega,\omega_t)$ rather than $(\omega,\omega_{1-t})$.   For all configurations $\eta, \xi$, let $\overline F(\eta,\xi)$ be the indicator that $(A_n(b),A_n(w))$ occurs disjointly in $(\omega,\xi)$. Since $A_n(b)=\overline{A_n(w)}$, we have
\begin{equation}
    \forall\eta,\xi \quad \overline F (\eta,\xi)= F(\eta,\overline \xi).
\end{equation}
Using first  $  (\omega,{\omega_{1-t}})\overset{\mathrm{law}}=(\omega,\overline{\omega_t})$  and then the notation above, we can rewrite the dynamical two-disjoint arm probability as
\begin{equation}
   \mathbb E[F(\omega,{\omega_{1-t}})]=\mathbb E[ F(\omega,\overline{\omega_{t}})]=\mathbb E[\overline F(\omega,\omega_t)].
\end{equation}
Similarly, for every $x\in B_k$, we can rewrite the left-hand side of~\eqref{eq:33} as 
\begin{align}
  \mathbb E[D_x &F(\omega,{\omega_{1-t}})]\\
  &=\mathbb E[D_x F(\omega,\overline{\omega_{t}})]\notag\\&=\mathbb E\left[\left( F(\omega^x,(\overline{\omega_t})_x)- F(\omega_x,(\overline{\omega_t})_x)\right )\cdot\left( F(\omega_x,(\overline{\omega_t})^x)- F(\omega_x,(\overline{\omega_t})_x)\right )\right]\notag\\
                                         &=\mathbb E\left[\left( \overline F(\omega^x,(\omega_t)^x)- \overline F(\omega_x,(\omega_t)^x)\right )\cdot\left( \overline F(\omega_x,(\omega_t)_x)- \overline F(\omega_x,(\omega_t)^x)\right )\right].\label{eq:34}  
\end{align}
Let us fix $x\in B_k$. In order to estimate the quantity above, we use a modification of the event $\mathcal E_{k,t}(x)$ defined in Section~\ref{sec:two-interl-circ}. Using the same notation as in Section~\ref{sec:two-interl-circ},  we define the dynamical event
\begin{equation}
   \widetilde{ \mathcal E}_{k,t}(x)=\bigcup_{y\in B_k'}\{\omega\in E_k^b(x,y), \omega_t\in E_k^w(x,y)\}.
\end{equation}
exactly as ${\mathcal E}_{k,t}(x)$ except that in the union,  we reverse the colour of $y\in B_k'$: we ask that $y$ is black at time $0$ and white at time $t$.  Since $(\omega,\omega_t)\overset{\text{law}}=(\omega_t,\omega)$, we have 
\begin{equation}
  \label{eq:35}
  \mathbb P[\widetilde{\mathcal E}_{k,t}(x)]=\mathbb P[{\mathcal E}_{k,t}(x)]\overset{\text{Lem.~\ref{lem:4}}}\gtrsim\alpha_k.
\end{equation}
Let
\begin{equation}
    \mathcal A^d =\{(A^{k\text{-sep}}(b),A^{k\text{-sep}}(w)) \text{ occurs disjointly in }(\omega,\omega_t)\}. 
\end{equation}
By arm separation (see Appendix \ref{sec:separation-arms}), we have
\begin{equation}
  \label{eq:36}
  \mathbb P[\mathcal A^d]\gtrsim \mathbb E[\overline F(\omega,\omega_t)].
\end{equation}
Applying Proposition~\ref{prop:3} and using the two displayed equations above, we find 
\begin{equation}
\mathbb P[\widetilde{\mathcal E}_{k,t}(x)\cap\mathcal A^d]\ge\mathbb P[\widetilde{\mathcal E}_{k,t}(x)]\mathbb P[\mathcal A^d]\ge\alpha_k \mathbb E[\overline F(\omega,\omega_t)].\label{eq:41}
\end{equation}
We now show that 
\begin{align}
&  \mathbb P[\widetilde{\mathcal E}_{k,t}(x)\cap\mathcal A^d] \\&\qquad\le \mathbb E\left[\left( \overline F(\omega^x,(\omega_t)^x)- \overline F(\omega_x,(\omega_t)^x)\right )\cdot\left( \overline F(\omega_x,(\omega_t)_x)- \overline F(\omega_x,(\omega_t)^x)\right )\right], \label{eq:42}
\end{align}
which, together with the displayed equation above, completes the proof of Equation~\eqref{eq:33}. Assume  $\widetilde{\mathcal E}_{k,t}(x)\cap \mathcal A^d$ occurs: by definition, there exist  two black   paths $\gamma_1, \gamma_2$ at time $0$ and two white paths $\pi_1,\pi_2$ at time $t$ such that 
\begin{itemize}
\item $\gamma_1$ is a path from the hexagon centred at 0 to the bottom of $R$ in $R$,
\item $\gamma_2$ is a path from $\Lambda_{6k}$ to $\partial \Lambda_n$ in $S$, 
\item $\pi_1$ is a path from  the hexagon centred at 0 to the top of $- R$ in $-R$,
\item $\pi_2$ is a path from $\Lambda_{6k}$ to $\partial \Lambda_n$ in $-S$,
\item  $\mathbb H(\gamma_1)\cap \mathbb H(\pi_1)=\emptyset$ and $\mathbb H(\gamma_2)\cap \mathbb H(\pi_2)=\emptyset$. 
\end{itemize}
Furthermore, there exists  $y \in B_k'$ such that 
\begin{equation}
  \omega\in E_k^b(x,y)\quad\text{and} \quad \omega_t\in E_k^w(x,y).
\end{equation}
We first claim that
\begin{equation}
  \label{eq:45}
 \overline F(\omega^x,(\omega_t)^x)=1.
\end{equation}
To see this consider a black path at time $0$ that starts with $\gamma_1$, then goes through $x$ via part of a black circuit in $A_+$, and finishes with $\gamma_2$,     and a white path at time $t$ that starts with $\pi_1$, then goes through $y$ via part of a white circuit in $A_-$, and finishes with~$\pi_2$.

We also have $\overline F(\omega_x,(\omega_t)^x)=0$. Indeed, in the configuration $\omega_x$,  any black path has to go through $y$ and in the configuration     $(\omega_t)^x$,  any white  path has to go through~$y$. Hence, there cannot exist two such disjoint paths. Therefore,

\begin{equation}
     \overline F(\omega^x,(\omega_t)^x)- \overline F(\omega_x,(\omega_t)^x)=1.
 \end{equation}
  The same argument with the role of $x$ and $y$ exchanged gives
 \begin{equation}
     \overline F(\omega_x,(\omega_t)_x)- \overline F(\omega_x,(\omega_t)^x)=1,
 \end{equation}
 and completes the proof of Equation~\eqref{eq:42}.

\appendix

\section{Appendix: Separation of arms}

\label{sec:separation-arms}
Arm separation refers to the fact that two disjoint paths starting at
$0$ will typically stay far apart: for example, if there exist a black
and a white arm from the origin to $\partial\Lambda_ n$, then one can
typically assume that they land  at distance $\gtrsim n$ of each other.
 
This was first studied by Kesten \cite{MR0879034} in the context of static percolation (see also \cite{MR2438816} and \cite{Manothesis}). For dynamical
percolation, arm separation was developed in the work of Garban, Pete
Schramm \cite[Section 5.4]{MR2736153} for events of the form $\{\omega,\omega_t\in A\}$, where $A$ is
an arm event. In our paper, we also use arm separation for events of
the form $\{\omega\in A,\omega_t\in B\}$ where $A$ and $B$ are two
(possibly different) arm events. Essentially, the standard techniques
apply in this case too.  For completeness, in this appendix we present a general argument that can be applied to prove \eqref{eq:29} and \eqref{eq:36}, largely based on \cite{duminil2021near}.

Throughout this section, we fix $t\in[0,1]$ and  drop the dependence in $t$ in our notation: we  write $\phi_k$ instead of $\phi_k(t)$ for the dynamical black-white arm probability. The goal is to prove that $\phi_k$ is comparable to the probability of the same event,  with the additional requirement that the two arms are separated at scale $k$, that is, they stay far apart at scale $k$ and are required to land on two macroscopically distant regions. In order to state this result formally, we introduce new notation. Let $k\ge 1$. Writing $R=[-k,k]\times[-3k, k]$ (as in Section \ref{sec:proof-lemma}), we define the two static events
\begin{align}
    &A_k^{\mathrm{sep}}(b)=\left\{
  \begin{array}[c]{c}
    \exists \text{ black path in $R$  from the hexagon centred at $0$  to the bottom of $R$}
  \end{array}\right \},\\
    &A_k^{\mathrm{sep}}(w)=\left\{
  \begin{array}[c]{c}
    \exists \text{ white path in $-R$  from the hexagon centred at $0$  to the top of $-R$}
  \end{array}\right \}.
\end{align}
The separated version of the dynamical black-white arm probability is defined by
\begin{equation}
    \phi_k^{\mathrm{sep}}=\mathbb P[\omega\in A_k^{\mathrm{sep}}(b),\, \omega_t\in A_k^{\mathrm{sep}}(w)].
\end{equation}
We prove the following proposition.
\begin{proposition}\label{prop:4}
  For all $k\ge 1$, we have
  \begin{equation}
       \phi_k^{\mathrm{sep}}\gtrsim \phi_k.
  \end{equation}
\end{proposition}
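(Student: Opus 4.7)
The plan is to prove Proposition~\ref{prop:4} by the standard Kesten-type multi-scale arm-separation argument, adapted to the dynamical asymmetric two-colour setting in the same spirit as Garban, Pete and Schramm~\cite[Section~5.4]{MR2736153} (and Nolin's thesis, and \cite{duminil2021near} which the authors cite). The key principle is that at each dyadic scale one pays only a constant factor to impose local landing/separation constraints on the two arms, and iterating this logarithmic number of times still yields an overall constant factor.

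The proof proceeds in three steps. First, I would establish a one-scale gluing lemma: for every $j$ with $2^j \le k/4$, conditional on the dynamical two-arm event $\{\omega \in A_{2^j}(b),\, \omega_t \in A_{2^j}(w)\}$, with probability bounded below by an absolute constant (uniform in $j$) the black arm at time $0$ can be extended through the annulus $\Lambda_{2^j, 2^{j+1}}$ to land in a prescribed macroscopic arc of $\partial\Lambda_{2^{j+1}}$, and simultaneously the white arm at time $t$ lands in a disjoint prescribed arc, with the two arms staying at distance $\asymp 2^j$ throughout. This is proved by conditioning on a lowest black and highest white arm up to scale $2^j$, and then gluing constant-probability RSW crossings in the annulus $\Lambda_{2^j, 2^{j+1}}$ that reroute each arm into its prescribed destination. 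The crucial point is that each such RSW crossing event is monotone in either $\omega$ or $\omega_t$ alone, so Proposition~\ref{prop:3} applies to combine them with the already-prescribed dynamical event without any correlation penalty. Second, I would iterate this one-scale lemma across all dyadic scales $2^j$ from a constant up to $k$. Since each scale contributes an independent constant factor to the probability (and the number of scales is absorbed since at each step we \emph{gain} a constant rather than lose), the standard martingale/induction argument of \cite[Section~5.4]{MR2736153} yields, with probability $\gtrsim \phi_k$, a pair of arms that are fully separated at every scale: the black arm at time $0$ lands on a fixed macroscopic arc (which we arrange to lie in the bottom of $\Lambda_k$), the white arm at time $t$ lands on a macroscopic arc in the top of $-\Lambda_k$, and the two arms remain confined to narrow tubes along the way.

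Third, to pass from separation on macroscopic boundary arcs to the specific requirement of being routed through the rectangles $R$ and $-R$, I would glue on a final constant-probability event consisting of RSW crossings in a constant-size region near the origin and near $\partial\Lambda_k$, again using Proposition~\ref{prop:3}. This forces the black arm into $R$ and the white arm into $-R$ at only a further constant multiplicative cost. The main obstacle is establishing the one-scale lemma in the dynamical \emph{asymmetric} setting where different events are prescribed at times $0$ and $t$: the original Garban--Pete--Schramm argument is stated for the symmetric case $\{\omega, \omega_t \in A\}$, but its mechanism (local modification, conditioning on extremal arms, RSW) transfers essentially unchanged provided one invokes Proposition~\ref{prop:3} in place of Harris--FKG whenever gluing events that live on different copies of the configuration. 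An analogous argument then proves~\eqref{eq:36}, since the dynamical disjoint arm event is a slight variant of the black/white version.
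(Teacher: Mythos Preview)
Your outline misidentifies the mechanism of arm separation. The one-scale lemma you propose cannot be proved by ``conditioning on a lowest black and highest white arm and gluing RSW crossings'': when the black arm in $\omega$ and the white arm in $\omega_t$ land near the same point of $\partial\Lambda_{2^j}$, the two rerouting RSW events live on overlapping hexagon sets, one increasing in $\omega$ and the other decreasing in $\omega_t$, and neither independence nor Proposition~\ref{prop:3} yields a constant lower bound on their joint occurrence (at $t=0$ this degenerates to a half-plane two-arm event of probability $o(1)$). Conditioning on an extremal arm in $\omega$ also does not leave $\omega_t$ fresh, since the pair is correlated. The GPS argument you cite does not proceed this way: like every descendant of Kesten's proof, it must separately control the probability that arms land close together, and this control---not RSW gluing---is the heart of the matter. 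Your iteration scheme never sets up a recursion that absorbs this; ``paying a constant at each of $\log k$ scales'' would in fact cost a polynomial factor.

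The paper supplies the missing control via the half-plane three-arm exponent. It defines a bad event $\mathcal{B}_k^\delta$ built from half-plane three-arm events near $\partial\Lambda_{k/2}$ and shows $\mathbb{P}[\mathcal{B}_k^\delta]\lesssim\delta^{1/3}$ (Lemma~\ref{lem:6}). A purely deterministic topological claim (inside Lemma~\ref{lem:7}) shows that whenever $\omega\notin\mathsf{Bad}_k^\delta$ and $\omega_t\notin\mathsf{Bad}_k^\delta$, the black arm in $\omega$ and the white arm in $\omega_t$ can each be chosen to avoid a prescribed small box, hence to land at macroscopically distant points $x_i,x_j$; a single RSW extension in disjoint regions (now amenable to Proposition~\ref{prop:3}) then gives $\mathbb{P}[\mathcal{A}_k\setminus\mathcal{B}_k^\delta]\le C(\delta)\,\phi_k^{\mathrm{sep}}$. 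Combined with Lemma~\ref{lem:5} and independence of $\mathcal{A}_{k/4}$ from $\mathcal{B}_k^\delta$, this yields the recursion $r_k\le C(\delta)+\tfrac12\, r_{k/4}$ for $r_k=\phi_k/\phi_k^{\mathrm{sep}}$, which bounds $r_k$ uniformly.
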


Proposition~\ref{prop:4} is simpler than the statement we use in the paper, but the method we present also applies to prove \eqref{eq:29} and \eqref{eq:36}. 

To prove the proposition, we rely on three lemmas. The first one asserts that separated arms can easily be extended.
\begin{lemma}\label{lem:5}
  There exists an absolute constant $c_1>0$ such that
  \begin{equation}
        \forall k\ge 1\quad \phi_{4k}^{\mathrm{sep}}\ge c_1 \phi_{k}^{\mathrm{sep}}.
  \end{equation}
\end{lemma}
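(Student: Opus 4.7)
The plan is to extend separated arms at scale $k$ to scale $4k$ by appending Russo--Seymour--Welsh (RSW) rectangle crossings in disjoint outer regions---one below the origin for the black arm in $\omega$, one above the origin for the white arm in $\omega_t$. Since the two arms live in reflected half-boxes, their extensions take place in geographically disjoint regions and do not interact.

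I would introduce the rectangle $E_b := [-4k, 4k] \times [-12k, -2k]$ and the sub-rectangle $E_b' := [-4k, 4k] \times [-3k, -2k]$, and define $\mathrm{Ext}_b$ as the event (increasing in $\omega$) that $\omega$ realises both a top-to-bottom black crossing of $E_b$ and a left-to-right black crossing of $E_b'$. A planar topology argument, standard in arm-separation theory (see~\cite{MR0879034, MR2736153}), shows that these two crossings together with the original arm from $A_k^{\mathrm{sep}}(b)$ combine into a single connected black cluster from the origin to $[-4k, 4k]\times\{-12k\}$ inside $R_{4k} := [-4k, 4k]\times[-12k, 4k]$; hence $A_k^{\mathrm{sep}}(b)\cap \mathrm{Ext}_b \subset A_{4k}^{\mathrm{sep}}(b)$. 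By RSW and Harris-FKG within $\omega$, one has $\mathbb{P}[\mathrm{Ext}_b] \gtrsim 1$. I would define a symmetric event $\mathrm{Ext}_w$ depending only on $\omega_t$ and supported on $-E_b$, satisfying the corresponding properties for white.

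To combine these with the separated-arm event under the coupled measure, I would condition on the pair $(\omega|_{\Lambda_k}, \omega_t|_{\Lambda_k})$ on the overlap $\Lambda_k = R \cap (-R) = [-k,k]^2$. With this conditioning, $\{\omega \in A_k^{\mathrm{sep}}(b) \cap \mathrm{Ext}_b\}$ becomes a function of $\omega$ at sites in $([-k,k]\times[-3k,-k]) \cup E_b \subset \{y \le -k\}$, while $\{\omega_t \in A_k^{\mathrm{sep}}(w) \cap \mathrm{Ext}_w\}$ becomes a function of $\omega_t$ at sites contained in $\{y \ge k\}$. As these two site sets are disjoint, the black and white sides are conditionally independent. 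Within each side, Harris-FKG applied to the remaining (uniform) freedom of $\omega$ (respectively $\omega_t$) yields that appending the extension event multiplies the probability by at most a constant. Integrating over the conditioning produces
\[
\phi_{4k}^{\mathrm{sep}} \ge \mathbb{P}\big[\omega \in A_k^{\mathrm{sep}}(b) \cap \mathrm{Ext}_b,\ \omega_t \in A_k^{\mathrm{sep}}(w) \cap \mathrm{Ext}_w\big] \gtrsim \phi_k^{\mathrm{sep}}.
\]

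The main obstacle is the planar topology step in the second paragraph: showing rigorously that the original separated arm and the two RSW crossings assemble into one connected black path reaching $[-4k,4k]\times\{-12k\}$. The delicacy is that the original arm may enter and leave the thin strip $[-k,k]\times[-3k,-2k]$ several times before terminating on $\{y=-3k\}$, so some case analysis (standard in the arm-separation literature) is needed to ensure the topological intersections always yield a shared black hexagon.
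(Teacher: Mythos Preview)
Your proof is correct and follows the same RSW--extension strategy as the paper. The one substantive difference is how you couple the extension events to the separated-arm event under the noised pair $(\omega,\omega_t)$: the paper invokes Proposition~\ref{prop:3} (the FKG property for the noised pair) in a single line to get
\[
\phi_{4k}^{\mathrm{sep}}\ge \phi_k^{\mathrm{sep}}\cdot \mathbb P[\omega\in E,\ \omega_t\in F],
\]
whereas you do this by hand---conditioning on $(\omega,\omega_t)|_{\Lambda_k}$, using disjointness of the black and white extension supports for conditional independence, and then applying ordinary Harris--FKG within each of $\omega$ and $\omega_t$ separately. Your argument is effectively a special-case re-derivation of what Proposition~\ref{prop:3} already packages; the paper's route is shorter precisely because that tool is available. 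The paper also chooses the narrower extension tube $R'=[-k,k]\times[-12k,-k]$ staying inside the column $[-k,k]$, which makes the inclusion $A_k^{\mathrm{sep}}(b)\cap E\subset A_{4k}^{\mathrm{sep}}(b)$ immediate and sidesteps the planar-topology verification you flag as the ``main obstacle'' (which is in any case routine and not a genuine difficulty).
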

The proof uses what is called a Russo-Seymour-Welsh construction, which we explain now. 
\begin{proof}
  Let $k\ge 1$.  Let $R'=[-k,k]\times[-12k,-k]$ and  $E$ be the static event that
  \begin{itemize}
  \item $R\cap R'$ is crossed horizontally by a black path, and
  \item $R'$ is crossed vertically by a black path.
  \end{itemize}
  Similarly, let $F$ be the static event that
  \begin{itemize}
  \item $-R\cap -R'$ is crossed horizontally by a white path, and
  \item $-R'$ is crossed vertically by a white path.
  \end{itemize}
  Using first that $A_k^{\mathrm{sep}}(b)\cap E\subset A_{4k}^{\mathrm{sep}}(b)$ and $A_k^{\mathrm{sep}}(w)\cap F\subset A_{4k}^{\mathrm{sep}}(w)$, and then Proposition~\ref{prop:3}, we obtain
  \begin{align}
    \phi_{4k}^{\mathrm{sep}}&\overset{\phantom{\text{Prop.}~\ref{prop:3}}}\ge \mathbb P[\omega\in    A_k^{\mathrm{sep}}(b)\cap E, \omega_t\in A_k^{\mathrm{sep}}(w)\cap F]\\
                            &\overset{\phantom{\text{Prop.}~\ref{prop:3}}}=\mathbb P[\{\omega\in    A_k^{\mathrm{sep}}(b), \omega_t\in A_k^{\mathrm{sep}}(w)\}\cap \{\omega\in E,\omega_t\in F\}]\\
   & \overset{\text{Prop.}~\ref{prop:3}}\ge   \phi_{k}^{\mathrm{sep}}\cdot\mathbb P[\omega\in E,\omega_t\in F].
  \end{align}
The result now follows from the Russo-Seymour-Welsh estimate in~\eqref{eq:19} because the events $\{\omega\in E\}$ and $\{\omega_t\in F\}$ are independent and both $\omega$ and $\omega_t$ have distribution~$\mathrm P$.
\end{proof}

Let $k\ge 1$. If the arms do not separate at scale $k$, then one should see  a certain ``bad'' event  that we now define. For $x\in \partial \Lambda_{k/2}$ and $1\le u\le v\le k/4$,  define
\begin{equation}
    A_{u,v}(x)=\left\{
  \begin{array}[c]{c}
    \exists \text{ three paths in $\Lambda_{k/2}$ from $\partial \Lambda_u(x)$ to $\partial \Lambda_v(x)$ of alternating colours}\\\text{ black-white-black or white-black-white in counterclockwise order}
  \end{array}\right \}.
\end{equation}
For $\delta\ge 1/k$, define the static event
\begin{equation}
  \label{eq:37}
  \mathsf{Bad}^\delta_k=\bigcup_{x\in\partial \Lambda_{k/2}} A_{k_1,k_2}(x)\cup {A_{3k_2,k_3}(x)}, 
\end{equation}
where $k_1=3\delta k$, $k_2=\delta^{1/3}k$,  and $ k_3=k/4$, and the dynamical event
\begin{equation}
   \mathcal B^\delta_k=\{\omega\in \mathsf{Bad}^\delta_k\}\cup \{\omega_t\in \mathsf{Bad}^\delta_k\}.
\end{equation}
We will show in Lemma~\ref{lem:7} that if we avoid this event, then the arms separate at scale~$k$. The next lemma shows that  it has  a small probability, provided $\delta$ is chosen small enough.
\begin{lemma}\label{lem:6}
  For all $k\ge 1$ and $\delta>0$ satisfying $k\ge 1/\delta$, we have
  \begin{equation}
        \mathbb P[\mathcal B_k^\delta]\lesssim \delta^{1/3}. 
  \end{equation}
\end{lemma}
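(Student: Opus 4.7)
My plan is to reduce the bound on $\mathbb P[\mathcal B_k^\delta]$ to a static estimate via Lemma~\ref{lem:1}, and then to control the union over $x\in \partial \Lambda_{k/2}$ by a covering argument whose key input is the half-plane three-arm bound. Since $\omega_t$ has law $\mathrm P$ (Lemma~\ref{lem:1}), a union bound on the two events defining $\mathcal B_k^\delta$ gives $\mathbb P[\mathcal B_k^\delta]\le 2\,\mathrm P[\mathsf{Bad}_k^\delta]$, so it suffices to prove the static estimate $\mathrm P[\mathsf{Bad}_k^\delta]\lesssim \delta^{1/3}$.

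The main probabilistic input is the half-plane three-arm estimate. For every $x\in \partial \Lambda_{k/2}$ and $u<v\le k/4$, the three alternating arms defining $A_{u,v}(x)$ are forced to stay in $\Lambda_{k/2}$, and $x$ lies on its boundary, so these arms realise a polychromatic three-arm event in a (local) half-plane. The polychromatic three-arm half-plane exponent equals $2$; more precisely, one has the non-asymptotic bound
\begin{equation}
\mathrm P[A_{u,v}(x)]\lesssim (u/v)^2,
\end{equation}
which can be derived from Russo--Seymour--Welsh estimates and the BK inequality, without appealing to critical exponents. I will take this as known input.

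It remains to handle the union over $x\in \partial\Lambda_{k/2}$ in each of the two families appearing in~\eqref{eq:37}. For the first family, I would cover $\partial\Lambda_{k/2}$ by $\lesssim k/k_1$ boxes of side $k_1$: if $x$ lies in such a box centred at $x_0$, the event $A_{k_1,k_2}(x)$ implies a half-plane three-arm event from scale $2k_1$ to scale $k_2/2$ around $x_0$, and a union bound over the boxes gives
\begin{equation}
\mathrm P\Bigl[\bigcup_{x\in\partial\Lambda_{k/2}} A_{k_1,k_2}(x)\Bigr]\lesssim \frac{k}{k_1}\cdot\Bigl(\frac{k_1}{k_2}\Bigr)^2=\frac{k k_1}{k_2^2}\asymp \delta^{1/3}.
\end{equation}
An identical argument at the next pair of scales, with a covering by boxes of side $k_2$, yields
\begin{equation}
\mathrm P\Bigl[\bigcup_{x\in\partial\Lambda_{k/2}} A_{3k_2,k_3}(x)\Bigr]\lesssim \frac{k k_2}{k_3^2}\asymp \delta^{1/3}.
\end{equation}
Summing these two estimates and combining with the first reduction gives the claim. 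The specific choices $k_1=3\delta k$, $k_2=\delta^{1/3}k$, $k_3=k/4$ are dictated exactly by this balance: both terms scale like $\delta^{1/3}$.

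The main (minor) obstacle lies in making the covering argument clean: passing from an arbitrary $x$ in a box to its centre $x_0$ requires that the inner and outer scales are well-separated, i.e.\ that $2k_1\le k_2/2$ and $6k_2\le k_3/2$. Both are guaranteed by the hypothesis $k\ge 1/\delta$ up to adjusting the constants (or the bound is trivial for larger $\delta$). Beyond this bookkeeping, the argument is entirely standard.
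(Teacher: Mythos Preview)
Your proposal is correct and follows essentially the same approach as the paper: reduce to the static event via the fact that $\omega_t$ has law $\mathrm P$, invoke the half-plane three-arm bound $\mathrm P[A_{u,v}(x)]\lesssim (u/v)^2$, and control the union over $x\in\partial\Lambda_{k/2}$ by a covering/discretisation at the inner scale followed by a union bound, yielding $ku/v^2$ for each family and hence $\delta^{1/3}$ in both cases. The only cosmetic differences are that the paper discretises with points $x_1,\dots,x_N$ spaced at distance $u$ along $\partial\Lambda_{k/2}$ (rather than boxes) and adjusts the scales to $(3u,v/3)$ rather than $(2k_1,k_2/2)$.
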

\begin{proof}
  Fix $k\ge 1$ and $\delta>0$ as in the statement. Since both $\omega$ and $\omega_t$ have distribution~$\mathrm P$, it suffices to prove that
  \begin{equation}
        \mathrm P[\mathsf{Bad}_k^\delta]\lesssim \delta^{1/3}.
  \end{equation}
  Since the half-plane three-arm exponent is $2$ (see \cite[Chapter 1]{MR2523462}), we have
  \begin{equation}
    \label{eq:38}
    \forall x\in \partial \Lambda_{k/2}\ \forall 1\le u\le v\le k/4 \quad \mathrm P[A_{u,v}(x)]\lesssim\left(\frac u v\right)^2.
  \end{equation}
  Let $1\leq u, v\leq k/4$ such that $9u\le v$. Fix a sequence of points $x_1,\ldots, x_N \in \partial \Lambda_{k/2}$ in counterclockwise order such that $N\le 5k/u$, $\|x_1-x_N\|_{\infty}\le u$, and $\|x_i-x_{i+1}\|_\infty\le u$ for every $i<N$. If a three-arm event occurs around a point $x\in \partial \Lambda_{k/2} $ in the annulus $\Lambda_{u,v}(x)$ then such three arms must cross $\Lambda_{3u,v/3}$ around $x_j$, where $x_j$ is chosen such that $\|x_j-x\|_{\infty}\le u$. By union bound and the three arm estimate \eqref{eq:38}, we have
  \begin{equation}
        \mathrm P\bigg[\bigcup_{x\in \partial \Lambda_{k/2}} A_{u,v}(x) \bigg]\le  \mathrm P\bigg[\bigcup_{i=1}^N A_{3u,v/3}(x_i) \bigg]\lesssim \frac k u \cdot \left(\frac u v\right )^2=\frac{ku}{v^2}.
  \end{equation}
  Hence, by union bound again, we have
  \begin{equation}
        \mathrm P[\mathsf{Bad}^\delta_k] \le \underbrace{\mathrm P\bigg[\bigcup_{x\in \partial \Lambda_{k/2}} A_{k_1,k_2}(x)  \bigg]}_{\lesssim \delta^{1/3}}+\underbrace{\mathrm P\bigg[\bigcup_{x\in \partial \Lambda_{k/2}} A_{3k_2,k_3}(x)  \bigg]}_{\lesssim \delta^{1/3}}\lesssim \delta^{1/3},
  \end{equation}
  as desired.
\end{proof}
For $k\ge 1$, consider the dynamical event
\begin{equation}
   \mathcal A_k=\{\omega\in A_k(b),\, \omega_t\in A_k(w)\}.
\end{equation}

\begin{lemma}\label{lem:7}
  For every $\delta\in(0,1/1000)$ there exists $C(\delta)\in (0,\infty)$  such that
  \begin{equation}
       \forall k\ge \frac1\delta\qquad \mathbb P(\mathcal A_k\setminus\mathcal B^\delta_k)\le C(\delta)\cdot \phi_k^{\mathrm{sep}}.
  \end{equation}
\end{lemma}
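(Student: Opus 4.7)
The plan is to carry out a standard arm separation argument adapted to the dynamical setting via Proposition~\ref{prop:3}. First, partition $\partial\Lambda_{k/2}$ into $N=O(1/\delta)$ sectors $S_1,\ldots,S_N$ of diameter at most $\delta k$. On $\mathcal A_k$, the black arm of $\omega$ must exit $\Lambda_{k/2}$ through some $S_i$ and the white arm of $\omega_t$ through some $S_j$. Writing $\mathcal A_k^{i,j}$ for the sub-event with fixed landing sectors, a union bound over the $O(1/\delta^2)$ pairs reduces the problem to showing
\[
\mathbb P[\mathcal A_k^{i,j}\setminus \mathcal B_k^\delta] \leq c(\delta)\,\phi_k^{\mathrm{sep}} \qquad \text{for each pair } (i,j).
\]

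A key observation is that $\Lambda_{k/2}\subset R\cap(-R)$, so the arm pieces from the origin to $\partial \Lambda_{k/2}$ automatically lie in $R$ (resp.\ $-R$). It then remains only to extend them inside $R\cap\Lambda_{k/2,3k}$ and $-R\cap\Lambda_{k/2,3k}$ down to the bottom of $R$ (resp.\ up to the top of $-R$). To this end, the plan is to build two static events $H_b^{i,j}$ and $H_w^{i,j}$ in these annular regions, each obtained by chaining $O(1/\delta)$ rectangles of bounded aspect ratio (in terms of $\delta$) crossed in the appropriate color. The Russo-Seymour-Welsh estimate~\eqref{eq:19} combined with the Harris-FKG inequality then gives $\mathrm P[H_b^{i,j}]\wedge \mathrm P[H_w^{i,j}] \geq c_1(\delta)$.

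The main obstacle is to actually glue the arm event $\mathcal A_k^{i,j}\setminus\mathcal B_k^\delta$ to the events $\{\omega\in H_b^{i,j}\}$ and $\{\omega_t\in H_w^{i,j}\}$. Because the full arm extends beyond $\Lambda_{k/2}$ into $\Lambda_{k/2,3k}$, which is precisely where the RSW crossings live, these events are not independent of the arm event \emph{a priori}. This is where the exclusion of $\mathcal B_k^\delta$ enters: the absence of three-arm events in the annuli $\Lambda_{k_1,k_2}(x)$ and $\Lambda_{3k_2,k_3}(x)$ around points $x\in\partial\Lambda_{k/2}$ guarantees that each arm can be ``detached'' from a neighborhood of its landing sector at an intermediate scale, along standard exploration lines as in Kesten~\cite{MR0879034}. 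Concretely, one reveals both configurations progressively from outside inward, pinning down the landing sectors without exhausting the freedom in $\Lambda_{k/2,3k}$, and then enforces the RSW crossings using this remaining freedom; Proposition~\ref{prop:3} handles the dynamical coupling between $\omega$ and $\omega_t$ in the joint estimate. This yields
\[
\mathbb P[\mathcal A_k^{i,j}\setminus\mathcal B_k^\delta,\, \omega\in H_b^{i,j},\, \omega_t\in H_w^{i,j}] \gtrsim c_1(\delta)^2\cdot \mathbb P[\mathcal A_k^{i,j}\setminus\mathcal B_k^\delta].
\]
Since the left-hand side is contained in $\{\omega\in A_k^{\mathrm{sep}}(b),\, \omega_t\in A_k^{\mathrm{sep}}(w)\}$, whose probability is $\phi_k^{\mathrm{sep}}$, summing over $(i,j)$ completes the proof with $C(\delta)=O(\delta^{-2} c_1(\delta)^{-2})$.
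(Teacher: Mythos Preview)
Your overall framework---union bound over landing sectors on $\partial\Lambda_{k/2}$, then RSW extension to the separated event---matches the paper's. However, the way you invoke $\neg\mathcal B_k^\delta$ is where the argument breaks down.

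You propose to glue $\mathcal A_k^{i,j}\setminus\mathcal B_k^\delta$ to the RSW events $\{\omega\in H_b^{i,j}\}$, $\{\omega_t\in H_w^{i,j}\}$ via Proposition~\ref{prop:3}. But Proposition~\ref{prop:3} requires both functions to be $S$-increasing and $T$-decreasing on the common part of their supports, and $\mathcal A_k^{i,j}\setminus\mathcal B_k^\delta$ is \emph{not} monotone: the complement of a three-arm event has no monotonicity whatsoever. Your appeal to an exploration ``from outside inward'' does not repair this; exploration of $\omega$ reveals information about $\omega_t$ through the coupling, and in any case exploration and Proposition~\ref{prop:3} are two different mechanisms that do not combine in the way you describe.

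There is a second, related gap. Your union bound ranges over \emph{all} pairs $(i,j)$, including $i=j$ and nearby pairs. For such pairs the RSW corridors $H_b^{i,j}$ and $H_w^{i,j}$ cannot be built in a way compatible with the separated event, and there is no reason why $\mathbb P[\mathcal A_k^{i,j}\setminus\mathcal B_k^\delta]\lesssim \phi_k^{\mathrm{sep}}$ should hold. Note that $\mathcal B_k^\delta$ is a union of three-arm events in a \emph{single} configuration ($\omega$ or $\omega_t$); its complement says nothing directly about the relative position of the $\omega$-black arm and the $\omega_t$-white arm.

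The paper's proof resolves both issues at once by first extracting a \emph{deterministic} consequence of $\neg\mathsf{Bad}_k^\delta$ (the Claim): absence of the three-arm event $A_{3k_2,k_3}$ forces the black arm in $\omega$ to have \emph{two} landing points at mutual distance $\ge 2k_2$, so at least one of them, call it $x_i$, lies far from the white landing point $x_j$; then absence of $A_{k_1,k_2}$ at $x_j$ forces the black arm to avoid $\Lambda_{2\delta k}(x_j)$ entirely (and symmetrically for the white arm). This replaces $\mathcal A_k\setminus\mathcal B_k^\delta$ by a union of events $\mathcal G(i,j)$, each of the form $\{\omega\in E_i,\,\omega_t\in F_j\}$ with $E_i$ increasing, $F_j$ decreasing, supported in $\Lambda_{k/2}$, and with $\|x_i-x_j\|_\infty\ge 6\delta k$. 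Now Proposition~\ref{prop:3} applies cleanly to glue $\mathcal G(i,j)$ with disjoint RSW corridors. Your proposal is missing exactly this passage from the non-monotone event to the monotone $\mathcal G(i,j)$.
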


\begin{proof}
Fix $\delta\in (0,1/1000)$ and $k\ge 1/\delta$.    Fix a sequence of points $x_1,\ldots, x_N \in \partial \Lambda_{k/2}$ in counterclockwise order such that $N\le 3/\delta$, $\|x_1-x_N\|_{\infty}\le 2\delta k$, and $\|x_i-x_{i+1}\|_{\infty}\le 2\delta k$ for every $i<N$. We start with a deterministic claim.
  \begin{claim}
    If $\eta,\xi$ are two deterministic percolation configurations satisfying $\eta\in A_k(b) \setminus \mathsf{Bad}^\delta_k$ and $\xi\in A_k(w)\setminus \mathsf{Bad}^\delta_k$, then there exist two indices $i,j$ such that $\|x_i-x_j\|_{\infty}\ge 6\delta k$ and
    \begin{itemize}
    \item in $\eta$, there is a black path from the hexagon centred at $0$ to $\Lambda_{\delta k}(x_i)$ in $\Lambda_{k/2}\setminus \Lambda_{2\delta k}(x_j)$, and
    \item in $\xi$, there is a white path from the hexagon centred at $0$ to $\Lambda_{\delta k}(x_j)$ in $\Lambda_{k/2}\setminus \Lambda_{2\delta k}(x_i)$, and
    \end{itemize}
  \end{claim}
  \begin{proof}[Proof of the claim]
    Let $k_1,k_2,k_3$ be defined as below Equation~\eqref{eq:37}. Since $\eta\in A_k(b)$, one can fix $i_0$ such that
    \begin{itemize}
    \item in $\eta$, the hexagon centred at $0$ is connected to $\Lambda_{\delta k}(x_{i_{0}})$ by a black path $\gamma_0\subset \Lambda_{k/2}$.
    \end{itemize}
    Consider the restriction of $\gamma_0$ to the annulus $\Lambda_{3k_2,k_3}(x_{i_0})$. If $\gamma_0$ was not connected to $\partial \Lambda_{k/2}$ in this annulus by a black path, then the  three-arm event $A_{3k_2,k_3}(x_{i_0})$ would occur, which would contradict $\eta\notin \mathsf{Bad}_k^\delta$. Therefore, one can also fix $i_1$ such that $\|x_{i_1}-x_{i_0}\|_{\infty}\ge 2 k_2 $ and
    \begin{itemize}
    \item in $\eta$,  the hexagon centred at $0$ is connected to $\Lambda_{\delta k}(x_{i_1})$ by a black path in $ \Lambda_{k/2}$.
    \end{itemize}
    Since $\xi\in A_k(w)$, we can fix and index $j\le N$ such that
    \begin{itemize}
    \item in $\xi$, the hexagon centred at $0$ is connected to $\Lambda_{\delta k}(x_j)$ by a white path in $\Lambda_{k/2}$.
    \end{itemize}
    Choose $i\in \{i_0,i_1\}$ such that $\|x_i-x_j\|_{\infty}\ge k_2$. At this point, in $\eta$,  we have a black path from the hexagon centred at $0$ to $\Lambda_{\delta k}(x_i)$ in $\Lambda_{k/2}$. If there  were no such path lying in $\Lambda_{k/2}\setminus \Lambda_{k_1}(x_j)$, then we would have a three-arm black-white-black in the annulus $\Lambda_{k_1,k_2}(x_j)$, which would contradict $\eta\notin \mathsf{Bad}_k^\delta$. Therefore, in $\eta$, there exists a black path from the hexagon centred at $0$ to $\Lambda_{\delta k}(x_i)$ in $\Lambda_{k/2}\setminus \Lambda_{k_1}(x_j)$. Similarly, using that $\xi\notin A_{k_1,k_2}(x_i)$, in this configuration,  the hexagon centred at $0$ is connected to $\Lambda_{\delta k}(x_j)$ by a white path in $\Lambda_{k/2}\setminus \Lambda_{k_1}(x_i)$. This concludes the proof of the claim.
    
  \end{proof}

  For $i,j$ satisfying $\|x_i-x_j\|_{\infty}\ge  6\delta k$, let $\mathcal G(i,j)$ be the event that $\omega$ satisfies the condition in the first item of the claim, and $\omega_t$ satisfies the condition in the second item of the claim. By the claim, we have
  \begin{equation}\label{eq:39}
    \mathbb P(\mathcal A_k\setminus\mathcal B^\delta_k)\le \sum_{\substack{i,j \\\|x_i-x_j\|_{\infty}\ge  6\delta k}}\mathbb P[\mathcal G(i,j)].
      \end{equation}
      Fix $i,j$ such that $\|x_i-x_j\|_{\infty}\ge 6 \delta k$. By considering two disjoint ``corridors'', one from $\Lambda_{\delta k}(x_i)$ to the bottom of $R$ in $R$, and one from  $\Lambda_{\delta k}(x_j)$ to the top of $-R$ in $-R$, one can use the  RSW estimate~\eqref{eq:19}  together with Proposition~\ref{prop:3} to prove the following. There exists $c(\delta)>0$ depending only on $\delta$  (in particular it does not depend on $k$) such that for every $i,j$ satisfying $\|x_i-x_j\|_{\infty}\ge 6\delta k$, we have   
      \begin{equation}
               \phi_k^{\mathrm{sep}} \ge c(\delta) \mathbb P[\mathcal G(i,j)].
      \end{equation}
      Plugging this estimate in \eqref{eq:39}, we obtain
      \begin{equation}
                  \mathbb P[\mathcal A_k\setminus\mathcal B^\delta_k]\le \frac{N^2}{c(\delta)} \phi_k^{\mathrm{sep}},
        \end{equation}
        which concludes the proof since $N\le 3/\delta$.
\end{proof}

\begin{proof}[Proof of Proposition~\ref{prop:4}]
  Let $c_1$ be the constant in Lemma~\ref{lem:5}. By Lemma~\ref{lem:6}, we can fix $\delta>0$ such that
  \begin{equation}
    \label{eq:40}
    \forall k\ge 1/\delta \quad \mathbb P[\mathcal B^\delta_k]\le \frac{c_1}2.
  \end{equation}
  Let $k\ge 1/\delta$. We bound each term in the decomposition
  \begin{equation}
        \phi_{k}=\mathbb P(\mathcal A_{k}\setminus\mathcal B^\delta_{k}) +\mathbb P(\mathcal A_{k}\cap\mathcal B^\delta_{k}). 
  \end{equation}
  The first term is smaller than $C(\delta)\phi_k^{\mathrm {sep}}$ by Lemma~\ref{lem:7}.  For the second term, we use independence  to get
  \begin{equation}
        \mathbb P(\mathcal A_{k}\cap\mathcal B^\delta_{k}) \le  \mathbb P(\mathcal A_{k/4}\cap\mathcal B^\delta_{k})\overset{\text{indep.}}=\phi_{k/4}\cdot \mathbb P(\mathcal B^\delta_{k}).
  \end{equation}
  Together with~\eqref{eq:40}, we obtain
  \begin{equation}
        \forall k\ge 1/\delta \quad \phi_{k}\le C(\delta)\phi_k^{\mathrm{sep}}+\frac {c_1} 2 \phi_{k/4}.
  \end{equation}
  Set $r_k=\phi_{k}/\phi_k^{\mathrm{sep}}$. Dividing by $\phi_k^{\mathrm{sep}}$  and using Lemma~\ref{lem:5} to bound  the ratio $\phi_{k/4}  /\phi_k^{\mathrm{sep}}$ by $\frac 1{c_1} r_{k/4}$, we obtain
  \begin{equation}
         \forall k\ge 1/\delta \quad r_{k}\le C(\delta) +\frac12 r_{k/4}.
  \end{equation}
  A direct induction yields that $r_k\le 2C(\delta) + C'(\delta)$ for every $k\ge 1$, where $C'(\delta)=\max\{r_\ell: \ell\le 1/\delta\}.$  
\end{proof}
    
\nocite{*}

\bibliographystyle{alpha}

\bibliography{bib}

\end{document}